\documentclass{lms}
\usepackage{graphicx}
\usepackage{amssymb}
\usepackage{amsmath}
\usepackage{amsfonts}
\usepackage{amsmath,amscd}
\usepackage{hyperref}

\newtheorem{thm}{Theorem}[section]
\newtheorem{cor}[thm]{Corollary}
\newtheorem{propn}[thm]{Proposition}
\newtheorem{lem}[thm]{Lemma}
\newtheorem{thm3}{Theorem 1.3}

\newnumbered{conj}[thm]{Conjecture}
\newnumbered{rmk}[thm]{Remark}
\newnumbered{ques}[thm]{Question}
\newunnumbered{org}{Organization}
\newnumbered{defn}[thm]{Definition}
\newunnumbered{setup}{Setup}
\newunnumbered{adhyp}{Additional hypotheses}
\newunnumbered{extseif}{Extending to a Seifert surface}
\newunnumbered{exm}{Examples}
\newnumbered{ex}[thm]{Example}
\newunnumbered{refthm2}{Refining Theorem 1.2}
\newunnumbered{oddtwist}{Twists of odd order}

\title{Cosmetic two-strand twists on fibered knots}

\author{Carson Rogers}

\classno{57M25 (primary), 57M12 (secondary)}

\begin{document}
	
\maketitle

\newcommand{\m}{\mbox{ }}

\graphicspath{{./images/}}

\begin{abstract}
Let $K$ be a knot in a rational homology sphere $M$. This paper investigates the question of when modifying $K$ by adding $m>0$ half-twists to two oppositely-oriented strands, while keeping the rest of $K$ fixed, produces a knot isotopic to $K$. Such a \textit{two-strand twist} of order $m$, as we define it, is a generalized crossing change when $m$ is even and a non-coherent band surgery when $m=1$. A \textit{cosmetic} two-strand twist on $K$ is a non-nugatory one that produces an isotopic knot. We prove that fibered knots in $M$ admit no cosmetic generalized crossing changes. Further, we show that if $K$ is fibered, then a two-strand twist of odd order $m$ that is determined by a separating arc in a fiber surface for $K$ can only be cosmetic if $m=\pm 1$. 

After proving these theorems, we further investigate cosmetic two-strand twists of odd orders. Through two examples, we find that the second theorem above becomes false if `separating' is removed, and that a key technical proposition fails when the order equals 1. A closer look at an order-one example, an instance of cosmetic band surgery on the unknot, reveals it to be nearly trivial in a sense that we name \textit{weakly nugatory}. We correct the technical proposition to obtain a means of using double branched covers to show that certain band surgeries are weakly nugatory. As an application, we prove that every cosmetic band surgery on the unknot is of this type.
\end{abstract}

\section{Introduction}

Let $K$ be a knot in an oriented 3-manifold $M$ and $B$ be a 3-ball in $M$ intersecting $K$ in two properly embedded arcs. Assume that, once $K$ has been oriented somehow, $B\cap K$ is a trivial two-string tangle appearing as on the left side of Figure \ref{ntangle_rep}. That figure illustrates what we mean by making a \textit{two-strand $n$-twist} on $K$ for an integer $n>0$: this notion will be properly defined in Section 2.4. For $n<0$, a two-strand $n$-twist is likewise described by reversing all crossings on the right side of Figure \ref{ntangle_rep}. Note that the result is always a new knot $K'$ in $M$, due to the relative orientations of the two strands. We refer to the unknotted circle $c$ on the left side of the figure as the corresponding \textit{twisting circle} for $K$. 

\begin{figure}[b]
	\centering
	\includegraphics[scale=0.4]{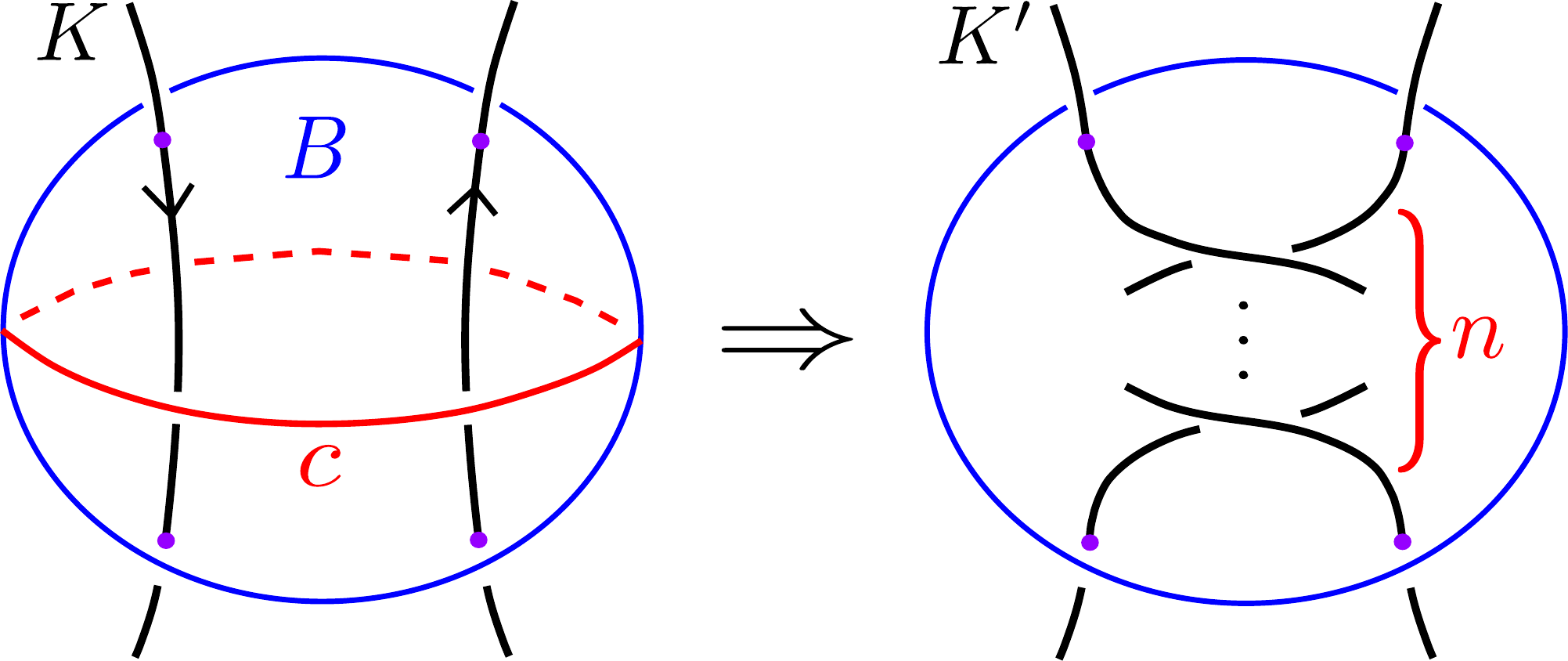}
	\caption{Performing a two-strand twist on a knot when $n>0$.}
	\label{ntangle_rep}
\end{figure}

This paper addresses the question of when a two-strand twist on a knot $K$ in a rational homology sphere produces a knot isotopic to $K$. A two-strand twist on $K$ is said to be \textit{nugatory} if the corresponding twisting circle bounds a disk embedded in $M-K$. In this case, it follows immediately that the resulting knot is isotopic to $K$. We say that a two-strand twist on $K$ is \textit{cosmetic} if it is not nugatory, but the resulting knot is still isotopic to $K$. 

Our study of this distinction is primarily motivated by two special cases. When $n$ is even, a two-strand $n$-twist is the same as a generalized crossing change of order $|n|/2$, and a two-strand $\pm 1$-twist is a type of non-coherent band surgery. The first of these cases has attracted the attention of low-dimensional topologists for some time, particularly for knots in the 3-sphere. 

\begin{conj}\label{ccc}
(Generalized cosmetic crossing conjecture) No knot in $S^3$ admits a cosmetic generalized crossing change.
\end{conj}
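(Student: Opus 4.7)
The plan is to attack Conjecture \ref{ccc} by partitioning knots in $S^3$ according to their fiberedness, handling the two classes with different tools, and combining the conclusions. Let $K \subset S^3$ be a knot admitting a cosmetic generalized crossing change of order $q \neq 0$ performed at a twisting circle $c$. If $K$ is fibered, I would apply the first theorem announced in the abstract of the present paper to obtain an immediate contradiction. The task therefore reduces to showing that no non-fibered knot in $S^3$ admits a cosmetic generalized crossing change.

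For non-fibered $K$, I would pass to the double branched cover $\Sigma_2(K)$, which is a rational homology sphere. The twisting circle $c$ lifts to a knot or link $\widetilde c \subset \Sigma_2(K)$, and the generalized crossing change at $c$ corresponds to a $\pm 1/q$ Dehn surgery on $\widetilde c$ with respect to the canonical framing induced by the disk in $S^3$ that $c$ bounds and that meets $K$ in two points. Because the original crossing change is cosmetic on $K$, the resulting surgery on $\widetilde c$ is a purely cosmetic surgery in $\Sigma_2(K)$. I would then invoke the cosmetic surgery obstructions of Ni--Wu and Ozsv\'ath--Szab\'o, which use $d$-invariants, the $\tau$ invariant, and knot Floer homology to severely constrain nontrivial cosmetic slopes on nullhomologous knots with nontrivial Floer invariants. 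To feed this machinery I need control over $\widehat{\mathrm{HFK}}(\Sigma_2(K), \widetilde c)$; I would obtain this from a minimal-genus Seifert surface $\Sigma$ for $K$, whose lift to $\Sigma_2(K)$ is a surface interacting with $\widetilde c$ in a controlled way, letting me read off the Seifert genus of $\widetilde c$ and its top Alexander grading.

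The main obstacle, and the step I expect to dominate the argument, is the case in which $\widetilde c$ turns out to be unknotted or Floer-trivial in $\Sigma_2(K)$: then the $d$-invariant machinery gives no purchase, and the cosmetic surgery obstructions degenerate. This is precisely the pathology excluded in the fibered setting, where the fiber surface forces $\widetilde c$ to realize a nontrivial element of the monodromy action on homology. To close the gap I would try to supplement Heegaard Floer methods with a classical invariant sensitive to the sign of a generalized crossing change, such as a refined Tristram--Levine signature, or the Alexander module technique used by Kalfagianni in the order-one fibered case. If even that refinement leaves cases open --- as I expect it may, for non-fibered $K$ whose branched covers have vanishing correction terms --- then new input from instanton Floer homology or from the $\mathrm{SL}_2(\mathbb{C})$ character variety of $S^3 - K$ would be the most plausible route to a complete proof of the conjecture.
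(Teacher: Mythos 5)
The statement you are asked to prove is stated in the paper as Conjecture \ref{ccc}, and the paper does not prove it: it remains an open conjecture, of which the paper establishes only the fibered case (Theorem \ref{mainthm1}, via the Montesinos trick, Gabai's sutured manifold results, Ni's theorem on surgeries in product manifolds, and Kotschick's bound on commutator lengths of Dehn twists). Your proposal correctly handles that fibered case by citing Theorem \ref{mainthm1}, and correctly identifies the standard framework for the rest --- passing to $\Sigma_2(K)$ and interpreting the generalized crossing change as a $1/n$-type surgery on the lift of the twisting arc, as in Torisu and Lidman--Moore. But from that point on the proposal is a research program, not a proof, and you say so yourself: ``If even that refinement leaves cases open --- as I expect it may --- then new input \dots would be the most plausible route.'' That is an acknowledgement of a gap, not a closure of one.

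Concretely, the step that fails is the non-fibered case. The Heegaard Floer cosmetic surgery obstructions you invoke require strong hypotheses on the ambient manifold and the surgery knot --- in the Lidman--Moore application, $\Sigma_2(K)$ must be an L-space satisfying an additional homological condition --- and these fail for general non-fibered knots. Worse, the scenario you flag as the ``main obstacle,'' where $\widetilde{c}$ (more precisely, the lift $\widetilde{\gamma}$ of the twisting arc) is unknotted or Floer-trivial, is not a removable technicality: when $\widetilde{\gamma}$ is unknotted one must still prove the twist is nugatory, which is exactly the content of Proposition \ref{liftprop} and requires the $|n|\geq 2$ hypothesis plus the isotopy $K\simeq K'$; and when $\widetilde{\gamma}$ is knotted but Floer-invisible to the $d$-invariant machinery, neither Tristram--Levine signatures nor the Alexander module technique is known to suffice. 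The fallback appeals to instanton Floer homology and $\mathrm{SL}_2(\mathbb{C})$ character varieties are placeholders with no argument attached. The conjecture therefore remains open under your proposal, exactly as it does in the paper.
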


The case of standard crossing changes (of order 1) is Problem 1.58 of \cite{Kir78}, which has been verified in the cases of 2-bridge knots \cite{Tor99}, fibered knots \cite{Kal12}, and knots whose branched double covers are L-spaces which satisfy a particular homological condition \cite{LM17}. In fact, the first and second of these works address the generalized form of the conjecture. Obstructions to  cosmetic generalized crossing changes have also been found for genus one knots and satellite knots \cite{BFKP12} \cite{BK16}, which resolve the conjecture for some knots of these types. 

The Montesinos trick can be used to reduce Conjecture \ref{ccc} to a question about cosmetic Dehn surgery in the cyclic double cover of $S^3$ branched along the knot. This technique is utilized in \cite{Tor99} and \cite{LM17}, and as explained in Section 2.4, it extends directly to the study of cosmetic two-strand $n$-twists on null-homologous knots in rational homology spheres when $|n|\geq 2$. In Section 3, we exploit this framework to prove the following results.

\begin{thm}\label{mainthm1}
Fibered knots in rational homology spheres do not admit any cosmetic generalized crossing changes. 
\end{thm}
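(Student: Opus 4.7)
My plan is to translate the problem via the Montesinos trick into a cosmetic Dehn surgery question on the branched double cover, and then use the fibered structure that $\tilde K\subset\Sigma_K$ inherits from $K\subset M$ to rule out the surgery.

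Setup. Suppose for contradiction that $K$ admits a cosmetic generalized crossing change of order $q\geq 1$, realized by a twisting circle $c$ bounding a crossing disk $D$ with $|D\cap K|=2$ and oppositely-signed intersections. The twist is $(-1/q)$-surgery on $c\subset M$, producing a knot $K'\cong K$. Let $p\colon\Sigma_K\to M$ be the double branched cover, with deck involution $\tau$. Since $\operatorname{lk}(c,K)=0$, the twisting circle lifts to a two-component link $c_1\sqcup c_2=c_1\sqcup\tau(c_1)\subset\Sigma_K$, and $\tilde D=p^{-1}(D)$ is an annulus cobounding $c_1,c_2$ and meeting $\tilde K=p^{-1}(K)$ in two points. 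The cosmetic identification $K\cong K'$ in $M$ lifts to an identification of pairs
\[
\bigl(\Sigma_K(c_1,c_2;-1/q,-1/q),\,\tilde K\bigr)\;\cong\;(\Sigma_K,\tilde K).
\]
Because $K$ is fibered in $M$ with fiber $F$ and monodromy $\phi$, the complement $\Sigma_K-\tilde K$ is the mapping torus of $\phi^2$ acting on $F^\circ$; in particular $\tilde K$ is fibered in $\Sigma_K$ with fiber surface a copy of $F$, and the Thurston norm on $H_2(\Sigma_K,\tilde K)$ detects its genus.

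The core step is to show that the surgery identity above, combined with this rigid fibered structure, forces each $c_i$ to bound a disk in $\Sigma_K-\tilde K$. I would do this by tracking how the top Alexander grading of $\widehat{HFK}(\Sigma_K,\tilde K)$ (which has rank one by Ni's fibered detection theorem) changes under $(-1/q)$-surgery on the disjoint curves $c_1$ and $c_2$, via the Ozsv\'ath--Szab\'o surgery exact triangle applied successively; equivalently, one could run a Gabai-style sutured-manifold argument showing that a non-nugatory $c_i$ would reduce the Thurston norm. Once both $c_1$ and $c_2$ bound disks in $\Sigma_K-\tilde K$, a $\tau$-equivariance argument (after an equivariant innermost-disk exchange to make the collection of disks $\tau$-invariant) descends them to a disk bounded by $c$ in $M-K$, forcing the original twist to be nugatory and contradicting the cosmetic hypothesis.

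The main obstacle will be obstructing the surgery at the middle step. Two features complicate the picture and must be used rather than ignored. First, the two surgery slopes are coupled at $-1/q$ on each component, and a priori the effects of the two surgeries on knot Floer invariants could conspire to cancel; ruling this out will require exploiting the $\tau$-symmetry of the pair $(c_1,c_2)$. Second, $c_1$ and $c_2$ are linked to each other via the cobounding annulus $\tilde D$, which meets $\tilde K$ in exactly two points and so carries essential homological information that must be fed into the argument (rather than just treating $c_1,c_2$ as arbitrary disjoint knots). The rigidity coming from fiberedness — whether phrased as monicness of $\Delta_{\tilde K}$, as Thurston-norm-minimizing $\tilde F$, or as the top-grading rank one of $\widehat{HFK}(\Sigma_K,\tilde K)$ — is what I expect to ultimately force the disks to exist; adapting Kalfagianni's $S^3$ argument to the rational-homology-sphere setting will happen precisely in this step.
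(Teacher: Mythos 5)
There is a genuine gap: the step you yourself flag as "the core step" --- showing that the lifted curves bound disks in $\Sigma(K)-\widetilde{K}$ --- is not an argument but a list of hoped-for tools, and the specific tools you name would not do the job. The cosmetic hypothesis says the surgered pair is again $(\Sigma(K),\widetilde{K})$, so the Thurston norm of the fiber and the rank-one top Alexander grading of $\widehat{HFK}$ are automatically unchanged; they carry no obstruction by themselves. Your suggested "Gabai-style" claim that a non-nugatory $c_i$ would reduce the Thurston norm is backwards: Gabai's theorem says the norm survives all but at most one filling slope, and indeed the paper uses exactly this (Proposition \ref{twistprop}) to show that the twisted Seifert surface is still a \emph{fiber} for the new knot --- genus preservation happens for non-nugatory circles all the time, so it cannot detect nugatoriness. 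You also set up the covering-space picture differently from the paper: you lift the twisting \emph{circle} $c$ to a two-component link with coupled $-1/q$ surgeries, whereas the Montesinos trick as used here lifts the twisting \emph{arc} $\gamma$ to a single closed curve $\widetilde{\gamma}$ lying on the lifted fiber $\widetilde{F}$, with surgery slope $1/n$ in the surface framing (Lemma \ref{montlem}). That single-curve, on-the-fiber formulation is what makes the actual mechanism available.

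That mechanism, entirely absent from your proposal, is the heart of the proof: the exterior of the lifted fiber is a product $F\times I$, so Ni's theorem on Dehn surgeries preserving a product structure (Theorem \ref{nithm}) forces $\widetilde{\gamma}$ to be isotopic into a fiber and identifies the effect of the surgery on the squared monodromy as composition with $T^n_{\alpha}$ up to conjugation (Theorem \ref{corethm}); then $K\cong K'$ makes $[T^n_{\alpha}]$ a commutator in $\mathrm{MCG}(F)$, and Kotschick's commutator-length bound (Corollary \ref{kotcor}) forces $\alpha$, hence $\widetilde{\gamma}$, to be trivial. Only then does the descent argument you sketch at the end become available, and even that requires the slope analysis of Proposition \ref{liftprop} (using $K\cong K'$ to exclude lens-space summands and to identify the boundary of the descended disk with $c$), not merely an equivariant Dehn's lemma. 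Without a concrete replacement for the Ni and Kotschick steps, the proposal does not constitute a proof.
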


\begin{thm}\label{mainthm2}
Let $K$ be a fibered knot in a rational homology sphere and $c$ be a twisting circle for $K$. Suppose that: 
\begin{itemize}
\item
	some fiber surface $F$ for $K$ is disjoint from $c$, and;	

\item
	a corresponding twisting arc for $K$ lies in and separates $F$.
\end{itemize}
The two-strand $n$-twist on $K$ determined by $c$ is then cosmetic for at most one integer $n$, which must be one of -1 and 1. 
\end{thm}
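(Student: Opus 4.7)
The plan is to pass to the double branched cover $\Sigma(K)$ via the Montesinos trick, reducing the cosmetic question to a surgery question on the lifted twisting circle, and then to exploit uniqueness of fibered structures to convert this into an algebraic constraint on mapping classes.

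First I would set up the picture inside $\Sigma(K)$. Since $F$ is disjoint from $c$ and has boundary $K$, its preimage $\widetilde F$ is the closed surface obtained by doubling $F$ along $K$; the fibration of $M\setminus K$ with monodromy $\phi$ lifts to a fibration of $\Sigma(K)\setminus\widetilde K$ with fiber $\widetilde F$ and monodromy $\widetilde\phi$, the symmetric extension of $\phi$ across the two copies. The lifted twisting circle $\widetilde c$ lies on $\widetilde F$, and because the twisting arc separates $F$, the curve $\widetilde c$ is essential and separating in $\widetilde F$ (essentiality following from the non-nugatory hypothesis implicit in cosmetic).

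Even values of $n$ are handled immediately by Theorem~\ref{mainthm1}, so the focus is on odd $n$. For $|n|\geq 2$ the Montesinos trick identifies $\Sigma(K_n)$ with a rational Dehn surgery on $\widetilde c$ in $\Sigma(K)$; I would verify that this surgery realizes the cut-and-reglue of $\Sigma(K)\setminus\widetilde K$ along $\widetilde F$ by a nonzero power of the separating Dehn twist $T_{\widetilde c}$, so the surgered exterior still fibers with fiber $\widetilde F$ and monodromy $\widetilde\phi\cdot T_{\widetilde c}^{k(n)}$ for an explicit integer $k(n)$. A parallel direct construction would treat the band-surgery cases $n=\pm 1$, yielding the same form with small admissible values $k(\pm 1)$. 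If the $n$-twist is cosmetic, the pairs $(\Sigma(K),\widetilde K)$ and $(\Sigma(K_n),\widetilde{K_n})$ are equivariantly homeomorphic, so by uniqueness of the fibered face of the Thurston norm ball the monodromies $\widetilde\phi$ and $\widetilde\phi\cdot T_{\widetilde c}^{k(n)}$ are conjugate in the relative mapping class group $\mathrm{MCG}(\widetilde F,\partial\widetilde F)$.

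The heart of the proof, and the main obstacle, is then to show that for a fixed essential separating curve $\widetilde c$ the set of integers $k$ with $\widetilde\phi\sim\widetilde\phi\cdot T_{\widetilde c}^k$ contains at most one nonzero element, and that this $k$ is small enough to force $n\in\{-1,+1\}$. I plan to split on the Nielsen--Thurston type of $\widetilde\phi$: in the pseudo-Anosov case, the dilatation is a conjugacy invariant and modification by $T_{\widetilde c}^k$ strictly changes dilatation once $|k|$ is large, bounding the admissible $k$; in the reducible case the canonical reducing system restricts the analysis to a subsurface containing $\widetilde c$; and the periodic case is elementary since $K$ is then highly restricted. Since $\widetilde c$ is separating, $T_{\widetilde c}$ acts trivially on $H_1(\widetilde F)$ and the obstructions must come from these nonabelian features of $\mathrm{MCG}$ rather than from homological invariants of $\widetilde\phi$. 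Finally, because $k(+1)$ and $k(-1)$ differ in sign, at most one of them can coincide with the unique admissible nonzero value, giving the stated dichotomy.
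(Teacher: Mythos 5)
Your overall frame (Montesinos trick, fibrations of branched double covers, a conjugacy constraint on monodromies) matches the paper's, but there are several gaps, two of which are fatal. First, the covering-space picture is misidentified: the double cover $\widetilde{\Sigma}(K)$ of $M_K$ fibers with fiber $F$ itself (a surface with boundary) and monodromy $\phi^2$, while the closed double $\widetilde{F}$ is a \emph{separating Heegaard surface} of $\Sigma(K)$, not a fiber of anything. Because of this, your assertion that the surgered exterior ``still fibers with monodromy $\widetilde{\phi}\cdot T^{k(n)}_{\widetilde{c}}$'' conceals the hardest step of the actual argument: the surgery curve is $\widetilde{\gamma}$, the lift of the twisting \emph{arc} (not of $c$), it sits in the Heegaard surface $\widetilde{F}$, and one must first show it is isotopic into a fiber of the product $\overline{\Sigma(K)-N(\widehat{F})}\cong F\times I$ before the surgery can be read as composing the monodromy $\phi^2$ with a power of a Dehn twist. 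The paper gets this from Ni's theorem (Theorem \ref{nithm}): the slope is $1/n$, which is non-integral exactly when $|n|\geq 2$, forcing $\widetilde{\gamma}$ to be a $0$-crossing knot. This cannot simply be ``verified,'' and it is one of the two places where $|n|\geq 2$ enters. Relatedly, you have misplaced the separating hypothesis: its role is to make the twisted surface $F'$ orientable, hence a genus-minimizing Seifert surface and therefore a fiber surface for $K'$ (Lemma \ref{twistlem} via Proposition \ref{unfib}), not to make $T_{\widetilde{\gamma}}$ homologically trivial; the figure-eight example of Section 4.1 shows the theorem fails without it.

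Second, your ``heart of the proof'' would not close the argument. You assume the lifted curve is essential because the twist is not nugatory; that implication is false and is the reverse of what must be shown. The paper proves, via Kotschick's commutator-length bound (Corollary \ref{kotcor}), that $[T^n_{\alpha}]$ being a commutator forces $\alpha$ to be \emph{inessential} for every $n\neq 0$, so $\widetilde{\gamma}$ is unknotted in $\Sigma(K)$, and then Proposition \ref{liftprop} (an equivariant Dehn's lemma descent, valid only for $|n|\geq 2$ --- the second place that hypothesis enters) converts this into ``the twist is nugatory.'' Your Nielsen--Thurston/dilatation plan, even if completed, would at best bound $|k|$ by a constant depending on $\phi$ and the curve rather than by $1$, and it offers no mechanism for the inessential case, which is the case that actually occurs. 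The conclusion $n=\pm 1$ is not a smallness estimate on admissible twist powers; it records that the machinery (non-integral slope in Ni's theorem, and Proposition \ref{liftprop}) breaks down at order one. Finally, the ``at most one of $\pm 1$'' clause is obtained in the paper by observing that the $+1$- and $-1$-twists along $c$ differ by a generalized crossing change, so if both were cosmetic Theorem \ref{mainthm1} would make both nugatory; your sign argument rests on the unproven uniqueness of an admissible $k$.
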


The definition of `twisting arc' is given in Section 2.4. The first theorem generalizes Kalfagianni's resolution of Conjecture \ref{ccc} for fibered knots in $S^3$. Of course, in light of Theorem \ref{mainthm1}, Theorem \ref{mainthm2} only provides new information in the case that $n$ is odd. It is made more interesting by the fact that, as we will see in Section 4.1, the statement becomes false upon removing the separating hypothesis on the twisting arc.

To establish Theorem \ref{mainthm1} and Theorem \ref{mainthm2}, we begin by using the Montesinos trick as mentioned above, together with a result of Gabai \cite{Ga87}, to reduce their proofs to a unified argument. A key fact is that, in both contexts, the relevant two-strand twist may be assumed to preserve the fiber surface. The central step is Theorem \ref{corethm}, which applies a result of Ni \cite{Ni11} to describe the relationship between the double covers of $M$ branched over two fibered knots of the same genus that are related by such a two-strand $n$-twist for $|n|\geq 2$. In particular, we will find that squared monodromies for the two fibered knots must differ by a power of a Dehn twist along a simple closed curve in the fiber surface, up to conjugation in the mapping class group.

\begin{oddtwist}
The \textit{order} of a two-strand $n$-twist on a knot is defined to be $|n|$. Following the proof of the above theorems, we turn our attention to the case of odd-order two-strand twists. In Section 4.1, we will find that there are simple examples of cosmetic two-strand twists of odd orders: in fact, both the unknot and the figure-eight knot in $S^3$ admit them. These examples will be enough to reveal why there is no easy way to extend the kind of reasoning developed in Section 3 to make stronger conclusions about two-strand twists of odd orders. In particular, our example of a cosmetic two-strand $-5$-twist on the figure-eight knot will show that there is a genuine need for the hypotheses on the twisting arc in Theorem \ref{mainthm2}.

Our other example is of a cosmetic two-strand $-1$-twist on the unknot. This gives rise to such cosmetic twists on every knot in $S^3$. These can be regarded as non-trivial, purely cosmetic band surgeries in the sense of \cite{IJ18}. However, in Section 4.2, we will find that they should still be viewed as `nearly trivial' in a certain sense, for which we coin the term \textit{weakly nugatory}. 

At the same time, the existence of such a two-strand twist on the unknot reveals that the conclusion of Proposition \ref{liftprop}, which is central to our application of the Montesinos trick in Section 3, is false in the case of two-strand twists of order one. Fortunately, it turns out that not all is lost. Through Proposition \ref{liftprop2}, we will see that the Montesinos trick can still be applied to show that certain two-strand $\pm 1$-twists are weakly nugatory. As a basic application, we obtain the following result.
\end{oddtwist}

\begin{thm}\label{mainthm3}
Every cosmetic two-strand $\pm 1$-twist on the unknot is weakly nugatory.
\end{thm}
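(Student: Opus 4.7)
\textbf{Proof plan for Theorem \ref{mainthm3}.} The plan is to invoke the Montesinos-type result Proposition \ref{liftprop2} to reduce the question to one about Dehn surgery in the double branched cover of $S^3$ over the unknot, which is again $S^3$. Let $U$ denote the unknot, let $c$ be a twisting circle for $U$ that determines a cosmetic two-strand $\pm 1$-twist, and let $U'$ be the knot produced by that twist, so that $U'$ is isotopic to $U$. The first move is to observe that both $\Sigma_2(S^3,U)$ and $\Sigma_2(S^3,U')$ are identified with $S^3$, so the operation induced on the double branched cover by the twist is a cosmetic modification of $S^3$ itself.

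Next, I would apply Proposition \ref{liftprop2} to this setting. Since the two strands entering the tangle have opposite orientations, the algebraic linking number of $c$ with $U$ is zero, so the preimage $\tilde{c}$ of $c$ in $\Sigma_2(S^3,U)$ has two components $\tilde{c}_1,\tilde{c}_2$. Proposition \ref{liftprop2} will then identify the lifted operation as a specific Dehn surgery on $\tilde{c}_1\cup \tilde{c}_2\subset S^3$ returning $S^3$, together with whatever refinement the corrected proposition provides in the order-one case. Because the overall manifold is preserved, I can appeal to classical obstructions to nontrivial $S^3$-to-$S^3$ Dehn surgery, most notably Gabai's solution of Property~R for links with linking number zero, to constrain $\tilde{c}_1\cup \tilde{c}_2$ strongly; the expectation is to force each component to bound a disk whose interior is disjoint from the other component (and, crucially, from the lift of $U$), up to the inherent flexibility that Proposition \ref{liftprop2} allows in the order-one case.

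The final step is descent: translate the structural conclusion about $\tilde{c}$ in $S^3=\Sigma_2(S^3,U)$ back through the branched-cover quotient to a statement about $c$ and $U$ in $S^3$, and verify that the resulting configuration is precisely the one encoded by the definition of weakly nugatory given in Section 4.2. Since weakly nugatory is introduced in that section as a branched-cover-theoretic weakening of nugatory, the match should be built into the definition, so this step is largely a matter of unpacking definitions.

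The hard part is the middle step: the correction in Proposition \ref{liftprop2} provides strictly less information at order one than its predecessor does at higher orders, so the cosmetic surgery on $\tilde{c}$ need not be trivial in the strong sense that would have been available in the proof of Theorem \ref{mainthm1}. I expect the main work to lie in showing that the flexibility permitted by Proposition \ref{liftprop2}, combined with $S^3$-to-$S^3$ surgery rigidity, is precisely enough to yield the weakly nugatory conclusion and no more --- that is, one should not expect to upgrade this to a statement that every such twist is nugatory, because the cosmetic two-strand $-1$-twist on the unknot exhibited in Section 4.2 obstructs exactly that strengthening.
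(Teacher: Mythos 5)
Your overall shape---lift to $\Sigma(U)\cong S^3$, use surgery rigidity there, then feed the result into Proposition \ref{liftprop2}---is the right one, but the middle step as written contains a genuine misidentification of the object on which the surgery is performed. The Montesinos correspondence set up in Lemma \ref{montlem} realizes the two-strand twist as a Dehn surgery on $\widetilde{\gamma}$, the lift of the twisting \emph{arc} $\gamma$ (whose endpoints lie on the branch locus, so it closes up to a single knot in $\Sigma(U)$), not on the preimage of the twisting \emph{circle} $c$, which is indeed a two-component link but plays no role in the surgery description. Consequently the relevant rigidity statement is the Gordon--Luecke theorem for a single knot: the twist changes $\Sigma(U)\cong S^3$ into $\Sigma(U')\cong S^3$ by a surgery on $\widetilde{\gamma}$ whose slope is distance one from the meridian, hence non-trivial, so $\widetilde{\gamma}$ must be unknotted. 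Property~R (or link-surgery obstructions applied to $\tilde{c}_1\cup\tilde{c}_2$) is not the right tool here and would not obviously deliver the hypothesis that Proposition \ref{liftprop2} actually requires, namely that $\widetilde{\gamma}$ is unknotted.

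The other issue is that you have placed the ``hard part'' in the wrong location. Proposition \ref{liftprop2} is stated precisely as: if $K'$ is isotopic to $K$ and $\widetilde{\gamma}$ is unknotted in $\Sigma(K)$, then the order-one twist is weakly nugatory. All of the descent from the branched cover back to $(S^3,U)$, and the matching with Definition \ref{weaknug}, is carried out inside the proof of that proposition; nothing further needs to be verified when proving the theorem. So the entire proof is two sentences: Gordon--Luecke gives unknottedness of $\widetilde{\gamma}$, and Proposition \ref{liftprop2} does the rest. Your closing remark---that one cannot upgrade the conclusion from ``weakly nugatory'' to ``nugatory'' because of the explicit cosmetic $-1$-twist on the unknot---is correct and is exactly the point the paper makes when showing Proposition \ref{liftprop} fails at order one.
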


This can be viewed as saying that the unknot does not admit any truly surprising cosmetic band surgeries. Following the proof of this theorem, we discuss the difficulty in saying more about the order-one case in the context of Theorem \ref{mainthm2}. 

\begin{org}
In Section 2, we establish the background required to prove Theorem \ref{mainthm1} and Theorem \ref{mainthm2}. We then prove these theorems in Section 3. While readers with knowledge of fibered knots can postpone looking over Sections 2.1.2, 2.2, and 2.3 until after encountering the statement of Theorem \ref{corethm}, it is essential to read Section 2.4 before beginning Section 3. The case of two-strand $n$-twists for odd $n$ is treated in Section 4. After introducing the key examples in Section 4.1, we focus on the order-one case in Section 4.2, and prove Theorem \ref{mainthm3} via Proposition \ref{liftprop2}. Section 4.3 concludes the paper by posing some questions.
\end{org}

\begin{acknowledgements}
The author wishes to thank Cameron Gordon and Tao Li for helpful discussions over the course of completing this work, as well as Tye Lidman, Allison Moore, and Yi Ni for helpful email correspondence. Special gratitude is owed to Tye and Yi, whose feedback on an early version of this work draft led to substantial improvements of the main results. In particular, much of the content beyond Theorem \ref{mainthm1} may not have come into existence if not for an inspirational question of Yi's. Last, but not least, the author thanks the referee for providing helpful comments on the first draft, including a valuable suggestion to improve a central piece of terminology.
\end{acknowledgements}

\section{Preliminaries}

We will always use $M$ to denote a closed, oriented 3-manifold. Say that $M$ is a \textit{rational homology sphere} if $H_*(M,\mathbb{Q})\cong H_*(S^3,\mathbb{Q})$. If $K$ is a knot in $M$, we use $M_K$ to denote the compact exterior $\overline{M-\eta(K)}$ of $K$ in $M$, where $\eta(K)$ is a tubular neighborhood of $K$ in $M$. For basic terminology and facts from 3-manifold topology that are taken for granted here, we refer the reader to \cite{Ro76} and \cite{Sc14}.

\subsection{Fibered knots}

Let $K$ be a null-homologous knot in $M$. We say that $K$ is \textit{fibered} if $M_K$ admits the structure of a surface bundle over $S^1$ in which the boundary of each fiber surface is a longitude of $K$. A fiber then extends to a \textit{Seifert surface} for $K$, a compact, orientable surface $F$ embedded in $M$ with $\partial F=K$. We will freely blur this distinction, viewing a fiber surface $F$ as properly embedded in $M_K$ or as a surface bounded by $K$ according to convenience.

\subsubsection{Fundamental facts}

To begin, we note that every closed 3-manifold does in fact contain a fibered knot \cite{Go75}. As the focus of this article is on rational homology spheres, we note that many concrete examples of fibered knots in rational homology spheres other than $S^3$ arise as lifts of fibered knots in $S^3$ to corresponding branched covers of prime power orders. (See p.362 of \cite{Gor72} for a proof that all such branched covers have first Betti number equal to 0.) \\

We will need the following uniqueness properties of fiber surfaces. A Seifert surface $F$ for a null-homologous knot $K$ in $M$ is said to be of \textit{minimum genus} if $g(F')\geq g(F)$ for every other Seifert surface $F'$ for $K$.

\begin{propn}\label{unfib}
Let $K$ be a fibered knot in a rational homology sphere $M$, with fiber surface $F$. If $F'$ is another Seifert surface for $K$, then the following are equivalent:
\begin{description}
	\item{(a) }
	$F'$ is of minimum genus.
	
	\item{(b) }
	$F'$ is another fiber surface for $K$.
	
	\item{(c) }
	$F'$ is isotopic to $F$ (rel $K$).
\end{description}
\end{propn}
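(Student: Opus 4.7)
The plan is to prove the cycle of implications (c) $\Rightarrow$ (b) $\Rightarrow$ (a) $\Rightarrow$ (c), with the substantive content concentrated in the last implication. For (c) $\Rightarrow$ (b), an isotopy of $F$ to $F'$ rel $K$ extends by the isotopy extension theorem to an ambient isotopy $\phi_t$ of $M$ fixing $K$, which after a small adjustment inside $\eta(K)$ gives an ambient isotopy of $M_K$ fixing $\partial M_K$. Post-composing the given bundle map $M_K \to S^1$ with $\phi_1^{-1}$ produces a fibration of $M_K$ in which $F'$ is a fiber.

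For (b) $\Rightarrow$ (a), the key point is that in a rational homology sphere $M$, every Seifert surface for $K$ represents the same class in $H_2(M_K, \partial M_K;\mathbb{Z})$ up to sign. The Mayer--Vietoris sequence for $M = M_K \cup \eta(K)$ yields $H_2(M_K;\mathbb{Z}) = 0$ (using that $H_2(M;\mathbb{Z}) = 0$, since $M$ is a rational homology sphere and $H_2$ of a closed $3$-manifold is torsion-free), and the long exact sequence of the pair then identifies $H_2(M_K, \partial M_K;\mathbb{Z}) \cong \mathbb{Z}$, generated by any Seifert surface (whose boundary is the preferred longitude, which is null-homologous in $M_K$ because $K$ is). Combined with Thurston's theorem that fibers minimize the Thurston norm in their class, and with the identity $\chi_-(S) = 2g(S) - 1$ for a connected Seifert surface of positive genus, this shows that any fiber surface has minimum genus.

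The heart of the proof is (a) $\Rightarrow$ (c). Let $F'$ be a minimum genus Seifert surface. Because $M_K$ is irreducible (a fibered knot exterior is irreducible), a standard compression/boundary-compression argument shows that $F'$ is incompressible and boundary-incompressible, and the homology calculation above yields $[F'] = \pm[F]$ in $H_2(M_K, \partial M_K;\mathbb{Z})$. I would then invoke the classical uniqueness of fibers as taut surfaces: in an irreducible $3$-manifold $N$ fibering over $S^1$ with fiber $F$, any properly embedded, connected, oriented, incompressible and boundary-incompressible surface representing $[F]$ and realizing the Thurston norm is ambient isotopic to $F$. One concrete route is to lift $F'$ to the infinite cyclic cover $\widetilde{M}_K \cong F \times \mathbb{R}$, observe that its lift is a properly embedded incompressible surface whose boundary matches that of a horizontal fiber, use that such a surface in $F \times \mathbb{R}$ is isotopic to $F \times \{0\}$, and then descend the isotopy equivariantly; alternatively the conclusion follows from Thurston's norm theory or Gabai's sutured manifold machinery.

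The main obstacle lies in (a) $\Rightarrow$ (c), which depends on the uniqueness-of-fiber theorem for fibered $3$-manifolds, a classical but non-trivial input. The adaptation from the $S^3$ setting to a rational homology sphere target is essentially confined to the homological identification of the Seifert-surface class carried out in the proof of (b) $\Rightarrow$ (a), and this is the only step where the rational homology sphere hypothesis is genuinely used.
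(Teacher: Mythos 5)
Your proposal is correct, and it reaches the same conclusion by a noticeably different assembly of classical inputs. The paper dispatches the equivalence of (a) and (b) by citing Lemma 2.2 of Kobayashi \cite{Ko89}, treats (c) $\Rightarrow$ (b) as clear, and concentrates on (a) $\Rightarrow$ (c), which it proves entirely inside $M_K$: after using the uniqueness of the bounding longitude to arrange $\partial F' = \partial F$, it applies Corollary 3.2 of Waldhausen \cite{Wa68} twice, first to remove the circles of $\mbox{Int}(F')\cap\mbox{Int}(F)$ by isotopies rel boundary and then to conclude that $F'$ is parallel to $F$. You instead prove (b) $\Rightarrow$ (a) from scratch via the homological identification $H_2(M_K,\partial M_K;\mathbb{Z})\cong\mathbb{Z}$ together with Thurston's norm-minimization for fibers, and you handle (a) $\Rightarrow$ (c) by passing to the infinite cyclic cover $F\times\mathbb{R}$. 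Your route makes the role of the rational homology sphere hypothesis more transparent (it enters exactly once, in pinning down the Seifert class, which is the same point where the paper invokes the unique bounding longitude), and it avoids the black box of \cite{Ko89}; the paper's route buys a shorter argument and, in (a) $\Rightarrow$ (c), sidesteps the one genuinely delicate point in your sketch, namely ``descend the isotopy equivariantly'' --- an isotopy performed upstairs in $F\times\mathbb{R}$ does not automatically push down, and the clean way to finish is precisely the paper's move of working directly in $M_K$ and citing Waldhausen's parallelism result (or, equivalently, observing that once $F$ and $F'$ are disjoint they cobound a product region). You should either make that step explicit or swap in the downstairs argument; with that adjustment your proof is complete.
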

\begin{proof}
The equivalence of (a) and (b) is contained in Lemma 2.2 of \cite{Ko89}, and it is clear that (c) implies (b). To complete the proof from here, it suffices to show that (a) implies (c). Note that since $M$ is a rational homology sphere and $K$ is null-homologous, there is a unique longitude of $K$ on $\partial\eta(K)$ which bounds in $M_K$. If $F'$ is as in (a), we may therefore assume that $\partial F'=\partial F$ in $M_K$, and that $\mbox{Int}(F')\cap \mbox{Int}(F)$ consists of mutually disjoint closed curves. Since $F'$ is incompressible, one may use Corollary 3.2 of \cite{Wa68} to remove components of this intersection one-by-one via isotopies of $F'$ (rel boundary). Another application of that result then implies that $F'$ is parallel to $F$ in $M_K$.
\end{proof}

Assume that $K$ is fibered. Let $F$ be a fiber surface for $K$ and $N(F)$ be a bicollar on $F$ in $M_K$. The surface exterior $M_F=\overline{E(K)-N(F)}$ is then homeomorphic to $F\times[-1,1]$, so that $\partial M_F\cap \partial N(F)$ is identified with $F\times\{-1,1\}$. We may reconstruct the knot exterior $M_K$ from $M_F$ by gluing $F\times\{-1\}$ to $F\times\{1\}$ together via an orientation-preserving homeomorphism $\phi:F\rightarrow F$, fixing $\partial F$ pointwise, so that $(x,1)$ is identified with $(\phi(x),-1)$. We say that $\phi$ is a \textit{monodromy} for $K$, and write $M_K=F\times[-1,1]/\phi$.

\begin{propn}\label{unmon}
Let $K$ and $K'$ be fibered knots of the same genus in a rational homology sphere $M$, so that $M_K=F\times[-1,1]/\phi$ and $M_{K'}=F\times [-1,1]/\phi'$ for some compact oriented surface $F$. If $K$ is isotopic to $K'$, then:
\begin{description}
	\item{(a) }
	There is an orientation-preserving, fiber-preserving homeomorphism between $M_K$ and $M_{K'}$.
	
	\item{(b) }
	There is an orientation-preserving homeomorphism $h:F\rightarrow F$, fixing $\partial F$ pointwise, such that $\phi$ is isotopic to $h\phi' h^{-1}$. In other words, $\phi$ and $\phi'$ represent conjugate elements of the mapping class group of $F$.
\end{description}
\end{propn}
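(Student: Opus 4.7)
The plan is to promote the isotopy of knots to a homeomorphism of exteriors, invoke Proposition \ref{unfib} to arrange that a fiber of $K$ is sent to a fiber of $K'$, and then cut along these fibers to read off the monodromy conjugation.

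First, the isotopy extension theorem realizes the given isotopy from $K$ to $K'$ by an ambient isotopy of $M$, whose restriction yields an orientation-preserving homeomorphism $\Phi : M_K \to M_{K'}$. Fixing fiber surfaces $F$ for $K$ and $F'$ for $K'$, the image $\Phi(F)$ is a Seifert surface for $K'$ of the same genus as $F'$, so Proposition \ref{unfib} applied to $K'$ implies that $\Phi(F)$ is isotopic to $F'$ rel $\partial$ in $M_{K'}$. Composing $\Phi$ with such an isotopy, I may assume $\Phi(F) = F'$. Cutting $M_K$ along $F$ and $M_{K'}$ along $F'$ produces products $W, W' \cong F \times [-1,1]$, and $\Phi$ lifts to an orientation-preserving homeomorphism $\tilde\Phi : W \to W'$. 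Up to postcomposing with the $I$-flip on $W'$ (which corresponds to reversing the fiber direction of the target mapping torus, a fiber-preserving self-homeomorphism of $M_{K'}$), I may assume $\tilde\Phi$ sends $F\times\{\pm 1\}$ to $F\times\{\pm 1\}$; denote the boundary restrictions by $h_{\pm 1} : F \to F$. The fact that $\tilde\Phi$ descends to $\Phi$ forces the relation $\phi' h_{+1} = h_{-1} \phi$ up to isotopy rel $\partial F$.

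The remaining step is to isotope $\tilde\Phi$ to a product map $(x,t)\mapsto(h(x),t)$ through an isotopy that descends to an isotopy of $\Phi$ between $M_K$ and $M_{K'}$. I would appeal to the classification of self-homeomorphisms of $F \times [-1,1]$ (for $F$ a compact oriented surface with boundary), which guarantees such a product representative in the mapping class group. Arranging the isotopy to be compatible with the gluings $\phi$ and $\phi'$, it descends to an isotopy of $\Phi$ to a fiber-preserving homeomorphism, proving (a). Part (b) then follows immediately: the product form collapses $h_{-1}$ and $h_{+1}$ to a single $h$, and the descent relation becomes $\phi' h = h\phi$, i.e., $\phi$ and $\phi'$ are conjugate via $h$ in the mapping class group of $F$.

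The main obstacle is the technical step of producing an isotopy of $\tilde\Phi$ to a product form that respects the mapping-torus gluings and so descends to a true isotopy of $\Phi$. I anticipate handling this by standardizing $\tilde\Phi$ near the boundary using collars, using the gluing compatibility to pin down its behavior on $F\times\{\pm 1\}$ simultaneously, and then invoking the identification of the mapping class group of $F \times [-1,1]$ relative to the appropriate parts of its boundary with the mapping class group of $F$ rel $\partial F$.
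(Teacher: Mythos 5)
Your route is the same one the paper takes: part (a) comes from promoting the isotopy of knots to a homeomorphism of exteriors and invoking the uniqueness of fiber surfaces (the equivalence of (b) and (c) in Proposition \ref{unfib}), and part (b) is the standard ``cut along a fiber and straighten to a product'' argument that the paper delegates to the general theory of fiber bundles and to Proposition 5.10 of \cite{BHZ14}. Most of your details are correct, including the descent relation $\phi' h_{+1}=h_{-1}\phi$ and the reduction of the final step to straightening a self-homeomorphism of $F\times[-1,1]$ that preserves the two ends.

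The one step that fails as written is the disposal of the end-swapping case by ``postcomposing with the $I$-flip on $W'$.'' The flip $(x,t)\mapsto(x,-t)$ is orientation-reversing on $F\times[-1,1]$, and it does not descend to a self-homeomorphism of $M_{K'}=F\times[-1,1]/\phi'$: compatibility with the identification $(x,1)\sim(\phi'(x),-1)$ would force $(\phi')^{2}=\mathrm{id}$. What the end-swapping case actually records is that $\Phi$ reverses the co-orientation of the fiber; since $\Phi$ preserves the orientation of $M$, this happens precisely when $\Phi|_{F}$ reverses the orientation of $F$, i.e.\ when the ambient isotopy reverses the orientation of the knot. Handled by naively flipping the target coordinate, the relation you would extract is $\phi\simeq h(\phi')^{-1}h^{-1}$ with $h$ orientation-reversing, which is not the stated conclusion (b). The clean repair is to fix orientations of $K$ and $K'$ compatible with the isotopy at the outset, orient $F$ and $F'$ accordingly, and choose the product identifications so that the positive $t$-direction agrees with the positive normal determined by these orientations; then $\Phi|_{F}$ is orientation-preserving, $\tilde\Phi$ automatically preserves the two ends, and no flip is needed. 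With that adjustment the rest of your argument goes through.
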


The fact that (a) holds if $K$ and $K'$ are isotopic can be viewed as a consequence of the equivalence of parts (b) and (c) of Proposition \ref{unfib}$\m$. Statements (a) and (b) are equivalent, as follows from the general theory of fiber bundles: this fact does not require $M$ to be a rational homology sphere. It can also be proven using classical 3-dimensional techniques and theorems on surface homeomorphisms: see Proposition 5.10 of \cite{BHZ14}, whose proof does not require the ambient manifold to be $S^3$. 

\subsubsection{Heegaard splittings induced by fibrations}

Let $K$ be a fibered knot in a 3-manifold $M$, so that $M_K=F\times[-1,1]/\phi$ for some compact oriented surface $F$ and  orientation-preserving homeomorphism $\phi:F\rightarrow F$ fixing $\partial F$ pointwise. Under this identification, for each $t\in[-1,1]$, let $A_t$ be an annulus embedded in $\eta(K)=\overline{M-M_K}$ such that $(F\times\{t\})\cap A_t=\partial F\times\{t\}$ and $\partial A_t=(\partial F\times\{t\})\cup K$. We can and will assume that $A_s\cap A_t=K$ when $s\neq t$ except for when $s=-1$ and $t=1$, in which case $A_s=A_t$.

Let $F_t=(F\times\{t\})\cup A_t$ for each $t$. Then $S=F_0\cup F_1$ is a closed, orientable surface which separates $M$. We may label the closures of the components of $M-S$ as $V_0$ and $V_1$, so that $V_i$ contains $F\times[i-1,i]\subset M_K$ for $i=0,1$. Further, $V_i$ is precisely the union of $F\times[i-1,i]$ with all of the annuli $A_t$ for $t\in[i-1,i]$, so $V_i$ is topologically obtained from $F\times[i-1,i]$ by thickening $(\partial F)\times[i-1,i]$ slightly into $\eta(K)$. 

Thus, both $V_0$ and $V_1$ are handlebodies (of genus $2g(F)$), so $S$ is a Heegaard surface for $M$. We say that a Heegaard splitting $M=V_0\cup_SV_1$ arising from this construction is \textit{induced by the fibration} of $M_K$. Note that, by our conventions, the knot $K$ lies on the Heegaard surface $S$, instead of in the interior of one of the handlebodies.

\subsection{Powers of Dehn twists in mapping class groups}

We adhere to the standard conventions of \cite{FM12}, in which left Dehn twists are viewed as positive. The reader is referred to that text for the basic notions around Dehn twists and mapping class groups.

In Theorem 7 of \cite{Kot04}, Kotschick uses the theory of Lefschetz fibrations of 4-manifolds to obtain a lower bound on the commutator lengths of powers of products of positive Dehn twists, within mapping class groups of closed, oriented surfaces of genus two and above. As observed by Kalfagianni \cite{Kal12}, his result immediately implies that powers of Dehn twists (positive or negative) along non-trivial curves in such surfaces cannot be commutators. We use $\mbox{MCG}(S)$ denote the mapping class group of the oriented surface $S$. If $\phi:S\rightarrow S$ is an orientation-preserving homeomorphism which fixes $\partial S$ pointwise, then we will use $[\phi]$ to denote the element of $\mbox{MCG}(S)$ represented by $\phi$.

\begin{thm}[Corollary 2.2 of \cite{Kal12}]\label{kotthm}
Suppose that $S$ is a closed oriented surface of genus $g(S)\geq 2$, and $\alpha$ is a simple closed curve in $S$. If $[T^n_{\alpha}]=ghg^{-1}h^{-1}$ for some integer $n\neq 0$ and $g,h\in\mbox{MCG}(S)$, then $\alpha$ is homotopically trivial in $S$.
\end{thm}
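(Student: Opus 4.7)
The plan is to derive this statement as a direct corollary of Theorem 7 of \cite{Kot04}, following the approach of \cite{Kal12}. First, since $(ghg^{-1}h^{-1})^{-1} = hgh^{-1}g^{-1}$, the set of commutators in any group is closed under inversion, so it suffices to treat the case $n > 0$. Suppose, for contradiction, that $\alpha$ is essential in $S$ and that $[T_\alpha^n] = ghg^{-1}h^{-1}$ for some $n \geq 1$ and $g, h \in \mbox{MCG}(S)$.

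The central tool is the standard correspondence between factorizations of mapping classes and Lefschetz fibrations. Reading the relation $T_\alpha^n = [g,h]$ in $\mbox{MCG}(S)$ as a positive factorization of a single commutator by $n$ right-handed Dehn twists along $\alpha$ produces a genus-$g(S)$ Lefschetz fibration over the 2-torus $T^2$, whose $n$ singular fibers all have vanishing cycle isotopic to $\alpha$. The total space $X$ of this fibration is a closed oriented 4-manifold whose signature is determined by $n$, $g(S)$, and whether $\alpha$ separates.

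Kotschick's Theorem 7 gives a lower bound on the commutator length of any non-trivial element of $\mbox{MCG}(S)$ (for $g(S)\geq 2$) that is expressible as a product of positive Dehn twists. The bound is obtained from signature constraints on the total spaces of such Lefschetz fibrations over positive-genus bases, which themselves rest on Seiberg-Witten-type inequalities for symplectic 4-manifolds. Applied to $[T_\alpha^n]$ with $\alpha$ essential and $n \geq 1$, Kotschick's inequality forces the commutator length strictly to exceed $1$, contradicting the hypothesis that $[T_\alpha^n]$ is a single commutator. Hence $\alpha$ must be homotopically trivial in $S$.

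The main obstacle is the invocation of Kotschick's theorem itself, whose proof uses non-trivial gauge-theoretic input and a careful analysis of Meyer's signature cocycle on $\mbox{MCG}(S)$. Treated as a black box, the reduction described above is essentially immediate. A secondary point is to verify that the bound applies uniformly to all $n\geq 1$ and to both separating and non-separating essential curves $\alpha$; this requires either invoking the stable commutator length consequences of Kotschick's estimate or, as in \cite{Kal12}, passing to a suitable power of the relation $T_\alpha^n = [g,h]$ before appealing to the signature bound.
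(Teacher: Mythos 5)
Your proposal is correct and follows essentially the same route as the paper, which states this result purely as a citation of Corollary 2.2 of \cite{Kal12} and notes that it is an immediate consequence of Kotschick's Theorem 7 in \cite{Kot04}. Your reduction to $n>0$ via closure of commutators under inversion, followed by the observation that Kotschick's lower bound on the commutator length of a product of $n\geq 1$ positive Dehn twists along an essential curve already exceeds $1$ (so no passage to powers is actually needed), is exactly the argument of Kalfagianni that the paper is invoking.
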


We will need an analogous statement for compact surfaces with one boundary component. Happily, the best possible statement in this case is an immediate consequence of the above result.

\begin{cor}\label{kotcor}
Theorem \ref{kotthm} holds true in the case that $S$ is any compact oriented surface with $\partial S\neq\emptyset$ and $\alpha$ is a simple closed curve in $\mbox{Int}(S)$.
\end{cor}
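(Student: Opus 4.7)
The plan is to reduce the claim to Theorem \ref{kotthm} by embedding $S$ in a closed oriented surface of genus at least two. The natural candidate is the double $DS := S\cup_{\partial S}\bar S$. Extending each mapping class by the identity on the mirror copy $\bar S$ defines a homomorphism $\iota:\mathrm{MCG}(S)\to\mathrm{MCG}(DS)$, well-defined because one may represent any element of $\mathrm{MCG}(S)$ by a homeomorphism fixing $\partial S$ pointwise and consider isotopies rel $\partial S$, following the convention of Section 2.1. Since $\alpha\subset\mathrm{Int}(S)$, the Dehn twist $T_\alpha$ has support in $S$, so $\iota([T_\alpha]) = [T_\alpha]$ in $\mathrm{MCG}(DS)$, where on the right $\alpha$ is viewed as a simple closed curve in $DS$. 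Consequently, the relation $[T_\alpha^n]=ghg^{-1}h^{-1}$ in $\mathrm{MCG}(S)$ pushes forward to the same commutator identity for $[T_\alpha^n]$ in $\mathrm{MCG}(DS)$.

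The next step is to invoke Theorem \ref{kotthm} in $DS$; provided $g(DS)\geq 2$, this forces $\alpha$ to be null-homotopic in $DS$. To transport this conclusion back to $S$, I would use that the folding map $DS\to S$ (identity on $S$, and the canonical identification $\bar S\to S$ on the other side) is a retraction of the inclusion $S\hookrightarrow DS$. The induced map $\pi_1(S)\to\pi_1(DS)$ is therefore injective, so null-homotopy of $\alpha$ in $DS$ descends to null-homotopy of $\alpha$ in $S$, as required.

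Finally, an Euler characteristic count gives $g(DS)=2g(S)+b(S)-1$, where $b(S)$ denotes the number of boundary components of $S$, so the genus hypothesis fails only when $S$ is a disk or an annulus. These two low-complexity cases constitute the only real obstacle to the doubling argument, but each is easily handled directly: the disk contains no essential simple closed curves in its interior, while $\mathrm{MCG}$ of the annulus rel boundary is infinite cyclic (hence abelian), so its only commutator is the identity, and $[T_\alpha^n]\neq 1$ for any $n\neq 0$ whenever $\alpha$ is essential. In every remaining case the doubling argument applies, completing the proof.
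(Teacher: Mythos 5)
Your proof is correct, and it follows the same overall strategy as the paper: embed $S$ in a closed surface, push the commutator relation forward through the inclusion-induced homomorphism on mapping class groups, apply Theorem \ref{kotthm} there, and pull the null-homotopy back to $S$. The difference lies in the choice of closed surface and in how the pullback is justified. The paper glues a once-punctured torus to each boundary component to form $\widehat{S}$, and then argues geometrically that the disk bounded by $\alpha$ in $\widehat{S}$ must lie in $S$, since every complementary subsurface created by the gluing has genus at least one. You instead take the double $DS$, which buys you a genuinely cleaner final step: the folding map is a retraction, so $\pi_1(S)\to\pi_1(DS)$ is split injective and the null-homotopy descends with no geometric argument needed. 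The price is that $g(DS)=2g(S)+b(S)-1$ can drop below $2$, forcing you to dispose of the disk and annulus separately --- which you do correctly (no essential curves in a disk; $\mathrm{MCG}$ of the annulus rel boundary is infinite cyclic, so its only commutator is trivial). The paper's construction avoids the annulus exception since it always adds genus $b(S)\geq 1$ to $g(S)$, though it is worth noting that the paper is silent on the disk case (where $\widehat{S}$ is a torus and Theorem \ref{kotthm} does not apply); your explicit treatment of the degenerate cases is, if anything, more careful on this point.
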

\begin{proof}
Construct a new surface $\widehat{S}$ by gluing a once-punctured torus to $S$ along each of its boundary curves, and extending the orientation on $S$ to the resulting closed surface. Suppose that $\alpha$ is a simple closed curve in $\mbox{Int}(S)$ such that $[T^n_{\alpha}]=ghg^{-1}h^{-1}$ for some $g,h\in\mbox{MCG}(S)$ and $n\neq 0$. By applying the inclusion-induced homomorphism $\mbox{MCG}(S)\rightarrow\mbox{MCG}(\widehat{S})$ to this equation, it follows that $T^n_{\alpha}$ also represents a commutator in $\mbox{MCG}(\widehat{S})$.

By Theorem \ref{kotthm}, it follows that $\alpha$ is homotopically trivial in $\widehat{S}$, which means that it bounds a disk in $\widehat{S}$. At the same time, if $\beta$ is a separating simple closed curve in $S$, then by our construction of $\widehat{S}$, any subsurface of $\widehat{S}$ bounded by $\beta$ that is not contained in $S$ must have genus $\geq 1$. Since $\alpha$ is such a curve, this means that the disk bounded by $\alpha$ must be contained in $S$. We therefore have the desired conclusion.
\end{proof}

\subsection{Dehn surgery in product manifolds}

By a \textit{product manifold}, we mean a 3-manifold of the form $F\times I$ for a compact, orientable surface $F$, where $I$ is a closed interval. For simplicity, we let $I=[0,1]$. Ni characterized the knots in product manifolds on which some non-trivial Dehn surgery returns the same product manifold \cite{Ni11}, up to a homeomorphism preserving $F\times\{0\}$ and $F\times\{1\}$. 

To state his result, let $K$ be a knot in $F\times I$ and $p:F\times I\rightarrow F$ be the natural projection. We will assume that any knot $K'$ in $F\times I$ under consideration is in general position with respect to $p$, so that $p|_{K'}$ is an embedding away from a finite set of transverse double points in its image. For an integer $n\geq 0$, say that $K$ is an \textit{$n$-crossing knot} if it can be isotoped so that this map has exactly $n$ double points, and no knot isotopic to $K$ admits fewer double points in its image under $p$. The following is a restatement of Theorem 1.1 of \cite{Ni11}.

\begin{thm}[(Ni)]\label{nithm}
Let $F$ be a compact, orientable surface and $K$ be a knot in $F\times I$. Suppose that performing Dehn surgery along $K$ with slope $\alpha$ yields a manifold homeomorphic to $F\times I$, via a homeomorphism which preserves each of $F\times\{0\}$ and $F\times\{1\}$. Then one of the following holds:
\begin{description}
	\item{(a) }
	$K$ is a 1-crossing knot, and $\alpha$ is the blackboard framing of $K$ determined by the corresponding projection onto $F$.
	
	\item{(b) }
	$K$ is a 0-crossing knot, and $\alpha=1/n$ for some nonzero integer $n$ with respect to the blackboard framing determined by $F$.
\end{description}
\end{thm}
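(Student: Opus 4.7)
The plan is to approach Theorem~\ref{nithm} via sutured manifold theory and sutured Floer homology, which together provide a natural framework for rigidity statements about product manifolds. First, I would recast the hypothesis as a statement about a balanced sutured manifold. Let $X$ denote the exterior of $K$ inside $F\times I$, endowed with sutures $\gamma = \partial F\times\{0,1\}$, and let $T=\partial N(K)$ be the extra torus boundary component. The assumption is then that Dehn filling $T$ along slope $\alpha$ returns the product sutured manifold $(F\times I,\gamma)$, via a homeomorphism preserving $F\times\{0\}$ and $F\times\{1\}$.

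Next, I would invoke the product detection theorem in sutured Floer homology: a taut, balanced sutured manifold has rank-one $\mathrm{SFH}$ if and only if it is a product sutured manifold. Combining this with the surgery exact triangle applied to the torus $T$, one obtains a sharp constraint: at most finitely many slopes on $T$ can yield a product filling, and each such filling forces $X$ itself to have extremely restricted Floer homology. In particular, $X$ must have the Floer-theoretic complexity of a trivially embedded object, modulo boundary-parallel pieces.

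Third, I would convert this Floer-theoretic constraint into the claimed geometric statement about the projection $p\colon K\to F$. The upshot of the rank computation should be that $K$ is isotopic inside $F\times I$ to a knot whose projection to $F$ has at most one double point; any higher crossing number would force enough SFH in $X$ to be incompatible with the product-filling hypothesis. From there, I would handle the two cases by direct analysis. If $K$ lies in a single level surface $F\times\{t\}$ (the $0$-crossing case), the only Dehn fillings returning $F\times I$ while preserving top and bottom correspond to integral twists $1/n$ relative to the blackboard framing, realized by powers of a Dehn twist along $K\times\{t\}$ extended by the identity elsewhere. If instead $K$ has exactly one crossing in its projection (the $1$-crossing case), then a single crossing change is the unique surgery that produces such a self-homeomorphism of $F\times I$, and it occurs precisely at the blackboard framing.

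The main obstacle is the passage from the algebraic rank constraint on $\mathrm{SFH}(X)$ to the concrete statement about crossing numbers. Producing a diagram of $K$ with at most one self-intersection from the Floer data requires a careful choice of fibered structure on $X$ adapted to the horizontal foliation of $F\times I$, together with a control of how taut foliations can meet $T$ after filling. Bridging these two worlds is the delicate technical heart of the argument, whereas the identification of slopes in cases (a) and (b) should then follow by a relatively direct calculation of framings in the blackboard projection.
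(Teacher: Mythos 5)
A preliminary remark: the paper offers no proof of this statement. It is quoted as Theorem 1.1 of \cite{Ni11} and used as a black box, so the only proof to compare yours against is Ni's own. Your outline is broadly consistent in spirit with that argument --- sutured manifold theory and the detection of product sutured manifolds by rank-one sutured Floer homology are indeed the relevant tools --- but as written it has a genuine gap precisely where the content of the theorem lies.

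Concretely: the passage from ``the $\alpha$-filling of the exterior $X$ is a product sutured manifold'' to ``$K$ admits a projection to $F$ with at most one double point'' \emph{is} the theorem, and your proposal does not supply it; you flag it yourself as the ``delicate technical heart'' and then stop. The intermediate assertions you lean on also do not hold as stated. To define $\mbox{SFH}(X)$ one must first choose sutures on the torus $T=\partial N(K)$, and the surgery exact triangle relates three particular fillings rather than yielding ``at most finitely many product slopes''; in any case finiteness is far weaker than what is needed, since the theorem pins down the slope exactly. You also never use the key fact that the \emph{meridional} filling of $T$ already returns $F\times I$ (with $K$ as the core of the filling solid torus), so that one has two product fillings to play off against one another --- this is the natural starting point for any argument of this type. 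Finally, in the $1$-crossing case the sentence ``a single crossing change is the unique surgery that produces such a self-homeomorphism, and it occurs precisely at the blackboard framing'' restates the conclusion rather than proving it: one must show both that the blackboard-framed surgery on a $1$-crossing knot returns $F\times I$ preserving the two boundary components, and that no other slope on a knot of crossing number at least one can do so. As it stands the proposal is a plausible research plan, not a proof.
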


In particular, if $\alpha$ is not an integral slope, then $K$ and $\alpha$ must be as in case (b).

\subsection{Relating two-strand twists to surgery in branched double covers}

Recall that Figure \ref{ntangle_rep} from the introduction illustrates what we mean by a two-strand $n$-twist on a knot $K$ in an oriented 3-manifold $M$. We will now be more precise. Regard $K$ as being oriented, though the choice of orientation will not matter. Define a \textit{twisting circle} for $K$ to be an embedded curve $c$ in $M_K$ bounding an embedded disk $D$ in $M$ which intersects $K$ twice transversely and zero times algebraically. We refer to $D$ as the corresponding \textit{twisting disk}.

\begin{defn}\label{maindef}
Let $B$ be a bicollar on $D$ in $M$, parametrized as $D^2\times[0,1]$, so that $D^2\times\{t\}$ intersects $K$ in two points of opposite sign for each $t\in[0,1]$. Let $T=B\cap K$. We regard the disks $D^2\times\{t\}$ as being oriented so that their positive normal vectors point in the direction of increasing $t$. For a nonzero integer $n$, a \textit{two-strand $n$-twist} on $K$ determined by $c$ is the operation of modifying the pair $(B,T)$ by the twisting homeomorphism $\tau_n:B\rightarrow B$ defined by $$\tau_n(p,t)=(e^{-itn}p,t)\m\m\mbox{ for all }p\in D^2\mbox{ and }t\in[0,1].$$ 
\end{defn}

\begin{rmk}
Note that this turns $K$ into a new knot $K'$ in $M$, though $K'$ does not inherit an orientation from $K$ when $n$ is odd. However, one can check that the isotopy class of $K'$ depends only on that of $c$ in $M_K$, and not on the particular choices of $D$ and $B$. When $n$ is even, performing the two-strand $n$-twist determined by $c$ is equivalent to performing Dehn surgery on $c$ with slope $-2/n$, which is precisely the operation of a generalized crossing change of order $|n|/2$ \cite{Kal12}.
\end{rmk}

Another bit of terminology is required. A \textit{twisting arc} for $K$ determined by a twisting circle $c$ is an arc $\gamma$ embedded in $M$ such that $\partial\gamma=K\cap\gamma$ and, for some twisting disk $D$ bounded by $c$, $\gamma$ is isotopic to an arc embedded in $D$ via an isotopy which fixes $K$ pointwise. It should be noted that, in the special case of generalized crossing changes, other authors typically refer to this as a `crossing arc,' $c$ as a `crossing circle,' and $D$ as a `crossing disk.'

For all that follows, assume that $M$ is a rational homology sphere and $K$ is a null-homologous knot in $M$. We may then consider the cyclic double cover $\Sigma(K)$ of $M$ branched over $K$. A twisting arc $\gamma$ for $K$ lifts to a simple closed curve $\widetilde{\gamma}$ in $\Sigma(K)$. By a special case of the Montesinos trick, performing a two-strand twist on $K$ corresponding to $\gamma$ changes $\Sigma(K)$ by a specific kind of Dehn surgery along $\widetilde{\gamma}$. We fully describe this correspondence below. The reader can turn to \cite{Gor09} for an exposition of the general Montesinos trick.

Let $B$ be a small 3-ball in $M$ containing $\gamma$ in its interior, so that $\partial B$ intersects $K$ transversely in exactly four points. (We may view $B$ as a small bicollar on $D$, as in Definition \ref{maindef}$\m$.) The double cover of $\overline{M-B}$ branched over $\overline{K-B}$ can be seen to equal $\overline{\Sigma(K)-\eta(\widetilde{\gamma})}$, where $\eta(\widetilde{\gamma})$ is a tubular neighborhood of $\widetilde{\gamma}$ in $\Sigma(K)$. Let $T$ be the tangle $B\cap K$, and $K'$ be the knot obtained from $K$ by performing a two-strand twist corresponding to $\gamma$.

\begin{figure}[b]
	\centering
	\includegraphics[scale=0.4]{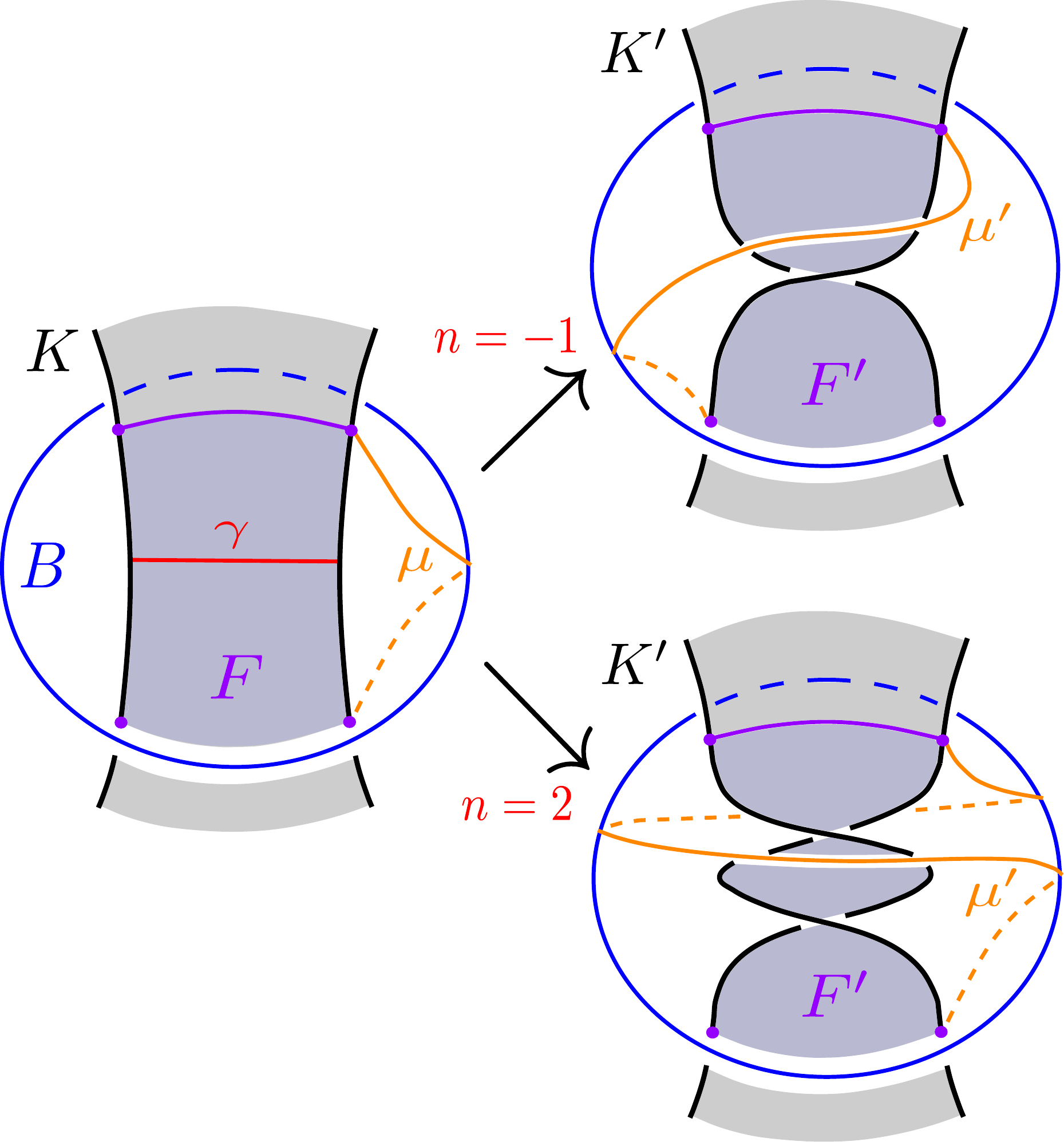}
	\caption{Instances of two-strand twists which act on a Seifert surface $F$ for $K$.}
	\label{montesinos1}
\end{figure}

By Definition \ref{maindef}, the pair $(M,K')$ is obtained from $(M,K)$ by regluing $(B,T)$ to $(\overline{M-B},\overline{K-T})$ via a map which can be viewed as an `$|n|/2$-fold Dehn twist' along a copy of the twisting curve $c$ embedded in $\partial B$. This is depicted in Figure \ref{montesinos1} for the cases $n=-1$ and $n=2$. That figure illustrates the effect of the regluing map on a \textit{meridian arc} for $K$ in $\partial B$. We define this to be an arc $\mu$ embedded in $\partial B$ for which there is a bigon $E$ embedded in $B$ such that: 
\begin{itemize}
\item
	$\partial E=\alpha\cup\mu$ for some component $\alpha$ of $B\cap K$. 
	
\item	
	$\mbox{Int}(E)$ is disjoint from both $\partial B$ and $K$.
\end{itemize}

This is precisely the type of arc on $\partial B$ which lifts to a meridian of $\widetilde{\gamma}$ in $\Sigma(K)$. The key observation is that the image $\mu'$ of $\mu$ under the regluing map is a meridian arc $\mu'$ for the new knot $K'$ which intersects $\mu$ in $|n|$ points in minimal position, relative to isotopies of $\partial B$ which fix each point of $K\cap\partial B$. This shows that the two-strand $n$-twist induces a Dehn surgery along $\widetilde{\gamma}$ whose slope has geometric intersection number $|n|$ with a meridian of $\widetilde{\gamma}$. Figure \ref{montesinos2} provides the picture on the torus for the case $n=2$, corresponding to the lower two-strand twist shown in Figure \ref{montesinos1}.

\begin{figure}[t]
	\centering
	\includegraphics[scale=0.4]{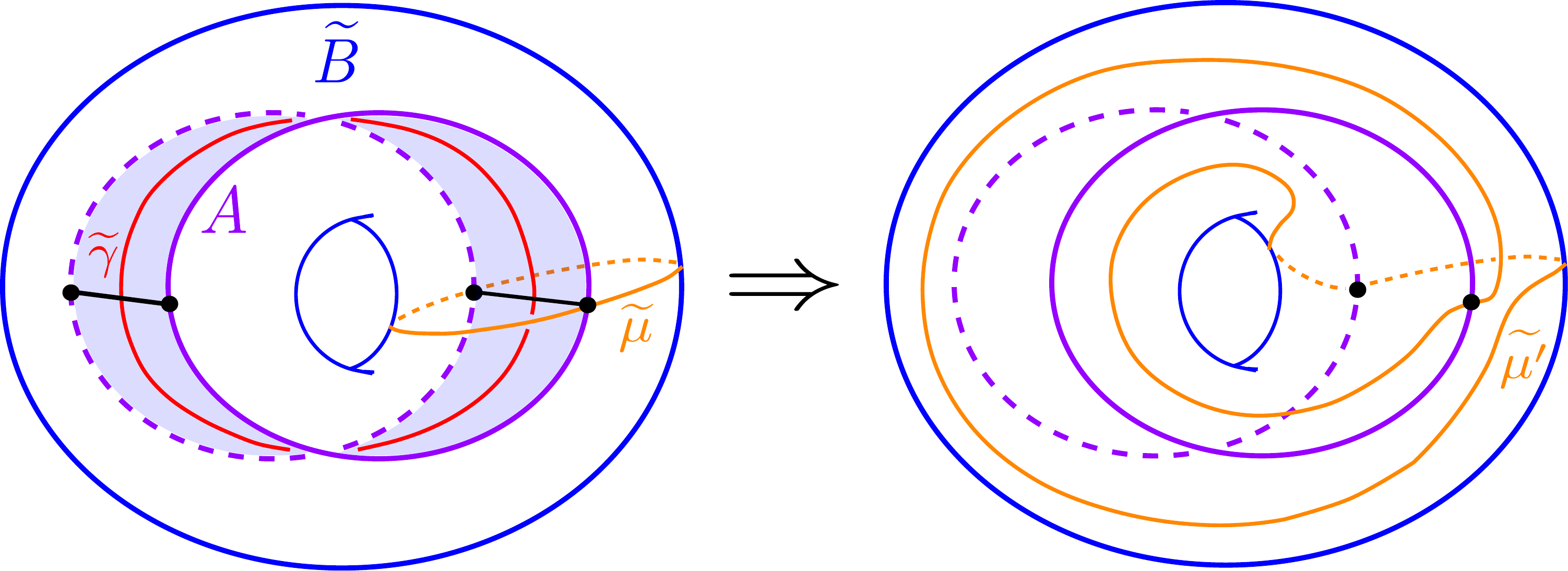}
	\caption{The Dehn surgery in $\Sigma(K)$ corresponding to the two-strand 2-twist shown in Figure \ref{montesinos1}.}
	\label{montesinos2}
\end{figure}

\begin{extseif}
If the twisting arc $\gamma$ is properly embedded in a Seifert surface $F$ for $K$, then we can refine this picture. As in Figure \ref{montesinos1}, we may assume that $F\cap B$ is a 4-gon which intersects $\partial B$ in two arcs. According to the labeling of that figure, the twisting which transforms the arc $\mu$ into the arc $\mu'$ occurs in the complement of $F\cap B$, so the corresponding two-strand twist naturally extends to $F$ to yield a new surface $F'$ bounded by $K'$.

In this situation, we say that $F'$ is obtained from $F$ by performing an \textit{$n$-twist} along $\gamma$. If $c$ is the corresponding twisting circle, as above, we will also refer to this operation as the $n$-twist on $F$ determined by $c$. Note that if $n$ is even or $\gamma$ separates $F$, then $F'$ will be orientable, and therefore a Seifert surface for $K'$.

In what follows, given an object $N$ embedded in $M$, let $\widetilde{N}$ denote the lift of $N$ to $\Sigma(K)$. Note that $\widetilde{\mu}$ is a meridian of $\widetilde{\gamma}$. The (closed) surface $\widetilde{F}$ intersects the solid torus $\widetilde{B}=\eta(\widetilde{\gamma})$ in an annulus $A$ such that each component of $\partial A$ is a longitude of $\widetilde{\gamma}$. Upon using the meridian $\widetilde{\mu}$ and this longitude to parametrize slopes on $\partial\widetilde{B}$, the lift $\widetilde{\mu'}$ of $\mu'$ to $\Sigma(K)$ visibly has slope $1/n$, where $n$ is as above. This is illustrated in Figure \ref{montesinos2} for the case $n=2$, which should be compared to the corresponding portion of Figure \ref{montesinos1}. We therefore have the following fact, which is a slight refinement of Lemma 2.1 of \cite{LM17}.
\end{extseif}

\begin{lem}\label{montlem}
Suppose that $K'$ is obtained from $K$ by performing a two-strand $n$-twist, and let $\widetilde{\gamma}$ be the lift of a corresponding twisting arc $\gamma$ for $K$ to $\Sigma(K)$. The manifold $\Sigma(K')$ is then obtained from $\Sigma(K)$ by Dehn surgery along $\widetilde{\gamma}$ with slope $\widetilde{\mu}'$ such that $\Delta(\widetilde{\mu},\widetilde{\mu}')=|n|$, where $\widetilde{\mu}$ is a meridian of $\widetilde{\gamma}$. Further, if $\gamma$ is properly embedded in a Seifert surface $F$ for $K$, then \[\widetilde{\mu'}=\frac{1}{n} \mbox{ with respect to the surface framing of }\widetilde{\gamma} \mbox{ determined by the lift } \widetilde{F} \mbox{ of F to }\Sigma(K).\]
\end{lem}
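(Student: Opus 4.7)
The plan is to assemble the proof from the Montesinos-trick setup laid out in the paragraphs preceding the statement, organizing it into three stages: identifying the relevant surgery, computing its distance from the meridian, and (when a Seifert surface is present) pinning down the exact slope.

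First I would formally record the Montesinos correspondence. Let $B$ be a small ball around $\gamma$ with $\partial B$ meeting $K$ in four points, and $T=B\cap K$. Because $K$ is null-homologous and $M$ is a rational homology sphere, the two-fold branched cover $\Sigma(K)$ is well-defined, and the double cover of $(\overline{M-B},\overline{K-T})$ is canonically identified with $\overline{\Sigma(K)-\eta(\widetilde{\gamma})}$; the four-punctured sphere $\partial B\setminus K$ lifts to the torus $\partial\eta(\widetilde{\gamma})$. By Definition \ref{maindef}, the pair $(M,K')$ is obtained from $(M,K)$ by regluing $(B,T)$ via the homeomorphism $\tau_n$. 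This regluing lifts to a self-homeomorphism of $\partial\eta(\widetilde{\gamma})$, and the resulting manifold is exactly $\Sigma(K')$, obtained from $\Sigma(K)$ by Dehn surgery on $\widetilde{\gamma}$. The new surgery slope is the image under this regluing of the old meridian, i.e.\ the lift $\widetilde{\mu}'$ of a meridian arc $\mu'$ for $K'$ on $\partial B$.

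Second, I would verify the intersection count. A meridian arc for $K$ on $\partial B$ lifts to a meridian of $\widetilde{\gamma}$; similarly, $\mu'$ lifts to $\widetilde{\mu}'$. From the explicit formula $\tau_n(p,t)=(e^{-itn}p,t)$, the regluing carries $\mu$ to an arc $\mu'$ meeting $\mu$ in exactly $|n|$ transverse points, in minimal position on the four-punctured sphere $\partial B\setminus K$ (this is the content of Figure \ref{montesinos1}). The double cover preserves geometric intersection of curves in minimal position, so $\Delta(\widetilde{\mu},\widetilde{\mu}')=|n|$, giving the first conclusion.

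Third, for the refined statement, suppose $\gamma\subset F$. Then $F\cap B$ is a 4-gon whose interior is disjoint from the support of the twisting $\tau_n$ in a standard picture (as noted in the discussion before the lemma). The surface $\widetilde{F}$ meets $\widetilde{B}$ in an annulus $A$ with $\partial A$ a pair of longitudes of $\widetilde{\gamma}$; pick one to be the longitude $\widetilde{\ell}$ of the surface framing. Using $(\widetilde{\mu},\widetilde{\ell})$ as a basis for $H_1(\partial\widetilde{B})$, I would read off the homology class of $\widetilde{\mu}'$ by lifting the picture of $\mu'$ relative to $\mu$ and $F\cap\partial B$ to the boundary torus and counting intersections with $\widetilde{\mu}$ and with $\widetilde{\ell}$. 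The arc $\mu'$ can be homotoped on $\partial B$ rel its endpoints to the concatenation of $\mu$ with an arc following the twisting circle $c$ once; lifting this concatenation to $\partial\widetilde{B}$ and translating it to a closed curve, it represents $\widetilde{\ell}+n\widetilde{\mu}$, i.e.\ slope $1/n$.

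The main technical point to watch carefully is the sign of $n$ in the last step: tracking the orientations on the bicollar, on $F$, and on the branched cover to confirm that the sign of $n$ agrees with the sign of the twist (not its negative) will require some care with the conventions of Definition \ref{maindef} and the standard meridian-longitude framing. Everything else is a bookkeeping exercise on a four-punctured sphere and its double cover.
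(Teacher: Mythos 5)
Your route is the same as the paper's: Lemma \ref{montlem} is not given a separate proof there, but is stated as a summary of the Montesinos-trick discussion immediately preceding it, and your three stages (the branched-cover correspondence identifying the regluing of $(B,T)$ with a Dehn surgery on $\widetilde{\gamma}$ whose slope is the lift of the new meridian arc; the intersection count $\Delta(\widetilde{\mu},\widetilde{\mu}')=|n|$; and the slope computation via the annulus $A=\widetilde{F}\cap\widetilde{B}$) are exactly the steps of that discussion.

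There is, however, a concrete error in your third stage, which is the only place where the refined claim $\widetilde{\mu}'=1/n$ actually gets established. The class you write down, $\widetilde{\ell}+n\widetilde{\mu}$, is the slope $n/1$ in the basis $(\widetilde{\mu},\widetilde{\ell})$, not $1/n$; worse, it satisfies $\Delta(\widetilde{\mu},\widetilde{\ell}+n\widetilde{\mu})=1$, which contradicts the count $\Delta(\widetilde{\mu},\widetilde{\mu}')=|n|$ you established in stage two. The source of the error is your description of $\mu'$ as $\mu$ concatenated with an arc following $c$ \emph{once}: that description has no dependence on $n$ and cannot produce the right answer for a general $n$-twist. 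What is true is that the regluing acts as an $|n|/2$-fold Dehn twist along the copy of $c$ in $\partial B$, so $\mu'$ winds $n/2$ times around $c$ relative to $\mu$; since $c$ is disjoint from $K$ and lifts to a pair of longitudes, and the preimage of the arc $\mu$ consists of two arcs each of which picks up $n/2$ copies of such a longitude, the lift is $\widetilde{\mu}'=\widetilde{\mu}+n\widetilde{\ell}$, i.e.\ slope $1/n$. Alternatively, you can bypass the concatenation bookkeeping entirely: you already know $\Delta(\widetilde{\mu},\widetilde{\mu}')=|n|$, and $\widetilde{\mu}'$ meets a component of $\partial A$ once (because $\mu'$ is a meridian arc for the new tangle and the twisting is supported away from the 4-gon $F\cap B$), which forces $\widetilde{\mu}'=\pm(\widetilde{\mu}\pm n\widetilde{\ell})$ and leaves only the sign ambiguity --- precisely the orientation issue you flag at the end. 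With that correction your argument coincides with the paper's.
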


\noindent
Here, $\Delta(\alpha,\beta)$ denotes the minimal geometric intersection number between $\alpha$ and $\beta$.

As stated in the introduction, we say that a two-strand twist on $K$ is \textit{nugatory} if a corresponding twisting circle $c$ bounds a disk embedded in $M-K$. In this case, it follows readily that the lift of a corresponding twisting arc is unknotted in $\Sigma(K)$, meaning that it bounds an embedded disk. A key fact for us, as in the work of \cite{Tor99} and \cite{LM17}, is that a partial converse holds true.

\begin{propn}\label{liftprop}
Let $K$, $c$, and $\widetilde{\gamma}$ be as above. Suppose that $|n|\geq 2$, and let $K'$ be the knot obtained from $K$ by performing the two-strand $n$-twist determined by $c$. If $K'$ is isotopic to $K$ and $\widetilde{\gamma}$ is unknotted in $\Sigma(K)$, then the two-strand twist on $K$ determined by $c$ is nugatory. 
\end{propn}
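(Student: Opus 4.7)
The plan is to use the Montesinos trick of Lemma \ref{montlem} to convert the hypothesis that $K'$ is isotopic to $K$ into a Dehn surgery relation in $\Sigma(K)$, exploit the unknottedness of $\widetilde{\gamma}$ via the Kneser--Milnor prime decomposition to pin down the surgery slope, and then descend the resulting picture through the covering involution to produce the desired disk bounded by $c$ in $M - K$.

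First, by Lemma \ref{montlem}, the isotopy between $K$ and $K'$ provides an identification $\Sigma(K') \cong \Sigma(K)$, under which $\Sigma(K)$ is realized as the Dehn filling of $\Sigma(K) \setminus \eta(\widetilde{\gamma})$ along a slope $\widetilde{\mu}'$ with $\Delta(\widetilde{\mu}, \widetilde{\mu}') = |n| \geq 2$.

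Next, since $\widetilde{\gamma}$ is unknotted, I would pick an embedded disk $\widetilde{D}_0 \subset \Sigma(K)$ bounded by $\widetilde{\gamma}$, let $\widetilde{\lambda}_0$ denote the corresponding disk longitude on $\partial \eta(\widetilde{\gamma})$, and take $\widetilde{B}^*$ to be a regular neighborhood of $\widetilde{D}_0$; this is a 3-ball containing $\widetilde{\gamma}$ as an unknot in its interior. Writing $\widetilde{\mu}' = a \widetilde{\mu} + b \widetilde{\lambda}_0$ with $\gcd(a, b) = 1$, we have $|b| = |n|$. Since the Dehn surgery is supported inside $\widetilde{B}^*$, its global effect is to form the connected sum $\Sigma(K) \# L(a, b)$. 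Equating this with $\Sigma(K)$ and invoking Kneser--Milnor uniqueness of prime decompositions forces $L(a, b) \cong S^3$, hence $a = \pm 1$, so the surgery slope is $\pm 1/n$ with respect to the disk framing.

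Finally, to conclude nugatoriness I would pass to $M$ via the covering involution $\iota$ on $\Sigma(K)$: the curve $\widetilde{\gamma}$ is $\iota$-invariant, and $\iota$ acts on it as a reflection whose two fixed points are the lifts of the points $\gamma \cap K$. By an equivariant version of Dehn's lemma (Meeks--Yau), I can take $\widetilde{D}_0$ to be $\iota$-invariant; its fixed set is then an arc of $\widetilde{K}$ properly embedded in $\widetilde{D}_0$, and the quotient $\widetilde{D}_0 / \iota$ is a disk $D^* \subset M$ with $\partial D^* = \gamma \cup \kappa$ for some subarc $\kappa \subset K$ and $\mbox{Int}(D^*) \cap K = \emptyset$. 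Leveraging $D^*$ together with the slope constraint $\pm 1/n$, which guarantees the upstairs surgery can be realized equivariantly by a Rolfsen-type twist along $\widetilde{D}_0$, I would modify the original twisting disk $D$ near $\gamma$ to obtain an embedded disk bounded by $c$ that avoids $K$. The hard part will be this last descent: making precise how $D^*$ and the $\pm 1/n$ slope combine to yield an actual embedded disk for $c$ disjoint from $K$, rather than merely a null-homotopy in $M - K$ or a disk with controlled intersections.
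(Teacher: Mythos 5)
Your first two steps track the paper's own proof closely: the Montesinos trick sets up the surgery description, and your prime-decomposition argument forcing $L(a,b)\cong S^3$ is exactly how the paper shows $\Delta(\widetilde{\mu}',\widetilde{\lambda})=1$, i.e.\ that the surgery slope is distance one from the disk longitude. The gap is in the final descent, which you flag yourself but which is where the actual content of the proposition lives. Two problems. First, the equivariant Dehn's lemma does not let you \emph{choose} the invariant alternative: it produces a compressing disk that is either invariant under the covering involution $\tau$ or disjoint from its translate, and you cannot force the former. The paper goes in the opposite direction: it arranges (perturbing if necessary) that $\tau(\widetilde{\Gamma})\cap\widetilde{\Gamma}=\emptyset$, so that the disk descends to an \emph{embedded} disk $\Gamma$ properly embedded in $\overline{M_K-B}$ with boundary an essential curve on the four-punctured sphere $\partial B-K$. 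Your invariant-disk quotient $D^*$ (a bigon cobounded by $\gamma$ and an arc of $K$) would, if it existed, immediately exhibit the band as trivial in the sense of Figure \ref{trivial_band}; that it need not exist is precisely why the harder argument is required.

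Second, and more importantly, you never identify the boundary of any disk with the twisting circle $c$. The paper's key step, valid for $|n|>2$, is that $\widetilde{\lambda}$ is the \emph{unique} slope on $\partial N$ meeting both $\widetilde{\mu}$ and $\widetilde{\mu}'=\widetilde{\mu}\pm n\widetilde{\lambda}$ once geometrically (any other candidate $a\widetilde{\mu}\pm\widetilde{\lambda}$ meets $\widetilde{\mu}'$ in $|1\pm na|>1$ points); since $c\subset\partial B$ meets both meridian arcs $\mu$ and $\mu'$ once transversely, its lift must have slope $\widetilde{\lambda}=\partial\widetilde{\Gamma}$, so $\partial\Gamma$ is isotopic to $c$ in $\partial B-K$ and $c$ bounds a disk in $M_K$. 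Your proposed ``modification of $D$ near $\gamma$ using $D^*$ and the $\pm 1/n$ slope'' supplies no mechanism for singling out $c$ among the essential curves on $\partial B-K$, and this uniqueness genuinely fails when $|n|=2$, where two slopes meet both $\widetilde{\mu}$ and $\widetilde{\mu}'$ once; that case needs the separate argument of \cite{LM17} that the paper imports, which your proposal also does not address.
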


This is more general than the corresponding Proposition 3.3 of \cite{LM17}. We now review the proof given there, and provide the additional argument needed to prove the above proposition. 

\begin{proof}
Let $N$ denote the exterior of $\widetilde{\gamma}$ in $\Sigma(K)$. In what follows, we freely identify simple closed curves on $\partial N$ with the corresponding elements of $H_1(\partial N)$. Since $\widetilde{\gamma}$ is an unknot, $N\cong (D^2\times S^1)\# \Sigma(K)$. Letting $\widetilde{\Gamma}:=D^2\times\{pt\}$, $\partial\widetilde{\Gamma}$ is the unique slope on $\partial N$ which bounds in $N$. Denote the covering involution on $\Sigma(K)$ by $\tau$. By the equivariant Dehn's lemma \cite{Ed86}, we may assume that either $\tau(\widetilde{\Gamma})\cap\widetilde{\Gamma}=\emptyset$ or $\tau(\widetilde{\Gamma})=\widetilde{\Gamma}$. As explained in \cite{LM17}, in the second case, $\widetilde{\Gamma}$ can in fact be perturbed to ensure that we are instead in the first case.

We therefore assume that $\tau(\widetilde{\Gamma})\cap\widetilde{\Gamma}=\emptyset$. Here, $\widetilde{\Gamma}$ descends to a properly embedded disk $\Gamma$ in $\overline{M_K-B}$, with boundary contained in $\partial B\cap M_K$, where $B$ is (as above) the 3-ball which lifts to $\eta(\widetilde{K})$. Our goal is to show that $\partial\Gamma$ is a copy of the twisting circle $c$, which will imply that the given two-strand twist is nugatory.

This is done in \cite{LM17} in the situation of a standard crossing change, which corresponds to the case $|n|=2$ in our setting. That argument works just as well in the case that $K$ is a null-homologous knot in a rational homology sphere. To treat the remaining cases, recall our previous notation: $\widetilde{\mu}$ is a meridian of $\widetilde{\gamma}$ and $\widetilde{\mu}'$ is the slope of the Dehn surgery along $\widetilde{\gamma}$ which yields $\Sigma(K')$, where $K'$ is the knot obtained from $K$ by the given two-strand twist. Let $\widetilde{\lambda}$ denote the longitude $\partial\widetilde{\Gamma}$ of $\widetilde{\gamma}$. We then have that $\Delta(\widetilde{\mu},\widetilde{\mu}')=|n|$ and $\Delta(\widetilde{\mu},\widetilde{\lambda})=1$. If $\Delta(\widetilde{\mu}',\widetilde{\lambda})$ were larger than one, then Dehn filling $N$ with slope $\widetilde{\mu}'$ would produce $L\sharp\Sigma(K)$ for some non-trivial lens space $L$. However, since the result of this filling is $\Sigma(K')$ and $K'$ is isotopic to $K$, $\Sigma(K')$ is orientation-preserving homeomorphic to $\Sigma(K)$. Thus, it must be that $\Delta(\widetilde{\mu}',\widetilde{\lambda})=1=\Delta(\widetilde{\mu},\widetilde{\lambda})$. It follows that, in the basis $(\widetilde{\mu},\widetilde{\lambda})$ of $H_1(\partial N)$, we have $\widetilde{\mu}'=\widetilde{\mu}\pm n\widetilde{\lambda}$. 

Now, suppose that $|n|>2$. Here, unlike in the cases $|n|=1$ and $|n|=2$, $\widetilde{\lambda}$ is the \textit{unique} slope on $\partial N$ intersecting both $\widetilde{\mu}$ and $\widetilde{\mu}'$ once geometrically. Note that any slope intersecting $\widetilde{\mu}$ exactly once is of the form $a\widetilde{\mu}\pm 1$ for some integer $a$, and by the above description of $\widetilde{\mu}'$, 
$\Delta(\widetilde{\mu}',a\widetilde{\mu}\pm 1)=|1\pm na|$. Since $|n|>2$, this can only equal 1 if $a=0$, in which case the slope is $\widetilde{\lambda}$. Since $\widetilde{\mu}$ and $\widetilde{\mu}'$ are lifts of the arcs $\mu$ and $\mu'$ in $\partial B$, and the twisting circle $c\subset\partial B$ intersects both $\mu$ and $\mu'$ once transversely, it follows that a lift of $c$ on $\partial N$ must have slope $\widetilde{\lambda}$. Consequently, the projection $\partial\Gamma$ of $\widetilde{\lambda}$ to $M$ is isotopic to $c$ in $\partial B-K$, so $c$ bounds a disk in $M_K$.
\end{proof}

The reader may be wondering about the need for the restriction $|n|\geq 2$ in this proposition. As we will show in Section 4, Proposition \ref{liftprop} is false when $n=\pm 1$. However, we will also see that a weaker version of the proposition does hold true in this case.

\section{Proof of Theorems \ref{mainthm1} and \ref{mainthm2}}

Through what follows, we assume that $K$ and $K'$ are knots in an oriented rational homology sphere $M$ such that: 
\begin{enumerate}
	\item
	$K$ and $K'$ are both fibered and have the same genus.
	
	\item
	$K'$ is obtained from $K$ by performing a two-strand $n$-twist determined by a twisting circle $c$.
\end{enumerate}

Note that we do \textit{not} immediately assume $K$ and $K'$ to be isotopic. To prove the two main theorems simultaneously, our first step is to observe that, in each case, the remaining hypotheses allow us to assume that the two-strand twist transforms a fiber surface for $K$ into one for $K'$, in the sense illustrated in Figure \ref{montesinos1}. This will pave the way for Theorem \ref{corethm}, which is the central result of this section. \\

We first treat the case relevant to Theorem \ref{mainthm2}. 

\begin{lem}\label{twistlem}
Let $\gamma$ be a twisting arc corresponding to the given two-strand twist. If $\gamma$ embeds in a fiber surface $F$ for $K$ as a separating arc, and if $c$ can be chosen to be disjoint from $F$, then the two-strand twist can be realized as an $n$-twist on $F$ along $\gamma$ which yields a fiber surface $F'$ for $K'$.	
\end{lem}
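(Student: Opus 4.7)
The plan is to first arrange the geometric setup so that the given two-strand twist is realized as an $n$-twist on $F$ along $\gamma$ in the sense of the ``Extending to a Seifert surface'' passage in Section 2.4, producing an orientable surface $F'$ with $\partial F' = K'$. Then, to identify $F'$ as a fiber surface, I would compare genera and apply Proposition \ref{unfib}.

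For the setup, I need a twisting disk $D$ containing $\gamma$ and meeting $F$ only along $\gamma$. Starting from any twisting disk $D$ bounded by $c$, the isotopy (rel $K$) from the definition of twisting arc lets me arrange $\gamma \subset D$; here the hypothesis $c \cap F = \emptyset$ helps ensure that after this movement, the twisting circle can still be taken disjoint from $F$ (using isotopies supported away from $\partial D$). The remaining intersections of $F$ and $D$ outside $\gamma$ then consist of arcs and simple closed curves in $\mbox{Int}(D)$, which I can remove by a standard innermost-disk and outermost-arc argument, using that $F$ is incompressible and $\partial$-incompressible in $M_K$ (as the fiber surface of a fibered knot). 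After this cleanup, $F \cap D = \gamma$; choosing a bicollar $B$ of $D$ thin enough then makes $F \cap B$ a regular neighborhood of $\gamma$ in $F$, i.e., a 4-gon meeting $\partial B$ in two arcs, matching Figure \ref{montesinos1}. Applying $\tau_n$ to $B$ produces a surface $F'$ bounded by $K'$, as described in ``Extending to a Seifert surface''; and because $\gamma$ separates $F$, $F'$ is orientable and hence a Seifert surface for $K'$.

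To finish, I would observe that $\chi(F') = \chi(F)$ (since $F'$ is obtained from $F$ by replacing the disk $F \cap B$ with its homeomorphic image under $\tau_n$). Both $F$ and $F'$ have a single boundary component, so $g(F') = g(F)$. Since $F$ is a fiber surface, $g(F) = g(K)$, and by the standing hypothesis $g(K) = g(K')$, so $g(F') = g(K')$. Thus $F'$ is a minimum-genus Seifert surface for the fibered knot $K'$, and applying the implication (a) $\Rightarrow$ (b) of Proposition \ref{unfib} (with $K'$ in place of $K$) gives that $F'$ is a fiber surface for $K'$.

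The main obstacle is the setup step---simultaneously arranging $\gamma \subset D$ and $F \cap D = \gamma$---which requires reconciling the isotopy-class definition of twisting arc with the geometric constraint that $c$ be disjoint from $F$. Once the 4-gon picture is in place, the rest of the argument is a short Euler characteristic computation and an invocation of Proposition \ref{unfib}.
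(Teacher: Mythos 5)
Your proposal is correct and follows essentially the same route as the paper: arrange the twist to act as in Figure \ref{montesinos1}, use the separating hypothesis for orientability of $F'$, and combine the genus equality $g(F')=g(F)=g(K)=g(K')$ with Proposition \ref{unfib} to conclude $F'$ is a fiber surface. The extra detail you supply on cleaning up $F\cap D$ via incompressibility and innermost arguments is the same device the paper invokes (slightly later, in its ``Additional hypotheses'' discussion), so there is no substantive difference.
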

\begin{proof}
By our hypotheses, we may assume that $F$ intersects the 3-ball in which the two-strand twist occurs as on the left of Figure \ref{montesinos1} from Section 2.4. Then, as shown on the right of that figure, $F$ is transformed into a surface $F'$ bounded by $K'$ via an $n$-twist along $\gamma$. The fact that $\gamma$ separates $F$ means that $F'$ is orientable. Since $g(F')=g(F)=g(K)=g(K')$, it follows directly from Proposition \ref{unfib} that $F'$ is a fiber surface for $K'$.
\end{proof}

We now wish to reach the same conclusion in the situation of Theorem \ref{mainthm1}. Here, $K$ and $K'$ are related by a generalized crossing change ($n$ is even), but we have no explicit hypotheses on the twisting arc. This is addressed by repeating the logic of Section 5.2 of \cite{Kal12}. While we could simply observe that the proofs of Lemma 5.3 and Proposition 5.4 of that paper also work when $S^3$ is replaced with an arbitrary rational homology sphere, we provide the arguments for the sake of completeness. 

In addition, we wish to observe that the hypothesis of Kalfagianni's Proposition 5.4 can be weakened. Let $c$ denote the twisting circle which determines the given two-strand twist on $K$. In addition, let $L$ denote the link $K\cup c$ and $M_L=\overline{M-(\eta(K)\cup\eta(c))}$ denote the compact exterior of $L$, where $\eta(K)$ and $\eta(c)$ are disjoint tubular neighborhoods of $K$ and $c$ (respectively) in $M$. 

\begin{lem}
Let all notation be as above. If $M_L$ is reducible, then the given two-strand twist from $K$ to $K'$ is nugatory.
\end{lem}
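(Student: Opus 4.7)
The plan is to analyze an essential 2-sphere $S \subset M_L$ relative to the twisting disk $D$ for $c$ via an innermost-circle argument, leveraging the irreducibility of $M_K$. Here the fiberedness of $K$ is what supplies this irreducibility: $M_K$ is a surface bundle over $S^1$ with compact orientable fiber, hence irreducible. The goal is to extract from $S$ and $D$ an embedded disk in $M - K$ bounded by $c$, which directly establishes that the two-strand twist determined by $c$ is nugatory.

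First, choose $S$ essential in $M_L$ and isotope it to meet $D$ transversely with $|S \cap D|$ minimized. The case $S \cap D = \emptyset$ is ruled out as follows. Since $S \subset M_K$, it bounds a 3-ball $B$ in $M_K$ by irreducibility, and $D$ must lie entirely on one side of $S$; if $c \subset B$, then $D \subset B$, contradicting $B \cap K = \emptyset$ together with $|D \cap K| = 2$; and if $c \not\subset B$, then $B$ is a ball in $M_L$ with $\partial B = S$, contradicting essentiality.

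Assume now $S \cap D \neq \emptyset$. Take an innermost circle $\partial E$ of $S \cap D$ on $S$, so $E \subset S$ is a disk with interior disjoint from $D$; the curve $\partial E$ bounds both a subdisk $E'$ of $D$ and an annulus $D - E'$ (the latter having $c$ as its other boundary component). Since $K$ is null-homologous in $M$ and $E \cap K = \emptyset$, the closed sphere $E \cup E'$ has vanishing algebraic intersection with $K$, so $|E' \cap K|$ is even and at most $|D \cap K| = 2$. In the case $|E' \cap K| = 0$, the sphere $E \cup E'$ lies in $M_K$ and bounds a ball $B_0$ there by irreducibility; repeating the argument of the previous paragraph shows $c \not\subset B_0$, so isotoping $S$ across $B_0$ in $M_L$ reduces $|S \cap D|$, contradicting minimality. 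Hence $|E' \cap K| = 2$, so $E'$ contains both points of $D \cap K$, and $E \cup (D - E')$, after smoothing the corner along $\partial E$, is an embedded disk in $M - K$ bounded by $c$; this disk witnesses the nugatoriness of the two-strand twist.

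The main technical subtlety is confirming that each reducing isotopy preserves essentiality of $S$ in $M_L$, which hinges on showing the balls produced by the innermost argument never contain $c$. Happily, this is settled by the same contradiction used to dispatch the case $S \cap D = \emptyset$, and no further input (e.g., from the hypothesis that $K$ and $K'$ are related by the twist or have the same genus) is required.
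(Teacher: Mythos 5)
Your argument is correct in outline but organized quite differently from the paper's. The paper runs the same innermost-circle machinery in the opposite direction: since $M_K$ is irreducible, an essential sphere $S\subset M_L$ must bound a 3-ball $B$ in $M_K$, and $B$ must contain $c$ (otherwise $S$ would already bound a ball in $M_L$); one then performs disk swaps on $D$ itself, replacing subdisks of $D$ by subdisks of $S$ until the resulting disk bounded by $c$ is disjoint from $S$ and hence contained in $B$, which is disjoint from $K$. That version never tracks how many points of $K$ lie in each subdisk, because disjointness from $K$ comes for free at the very end from $B\cap K=\emptyset$. You instead keep $D$ as a genuine twisting disk throughout, rule out $S\cap D=\emptyset$, and extract the nugatory disk $E\cup(D-E')$ directly from an innermost circle whose $D$-side subdisk captures both points of $D\cap K$; the parity argument via the null-homology of $E\cup E'$ is a clean way to exclude $|E'\cap K|=1$. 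The trade-off is that you need the auxiliary ball $B_0$ and a case analysis where the paper only needs the ball bounded by $S$ itself.

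One step needs repair. In the case $|E'\cap K|=0$ you propose to ``isotope $S$ across $B_0$,'' but since $\partial E$ was chosen innermost on $S$ rather than on $D$, the interior of $E'$ may contain further circles of $S\cap D$; other sheets of $S$ then pass through $\partial B_0=E\cup E'$ into the interior of $B_0$ and obstruct pushing $E$ across the ball. (Relatedly, ``repeating the argument of the previous paragraph'' to see $c\not\subset B_0$ requires the extra observation that the annulus $D-E'$ meets $\partial B_0$ only in $\partial E'$; this does hold because $\mbox{Int}(E)\cap D=\emptyset$, and then $c\subset B_0$ would force the whole annulus, including both points of $D\cap K$, into $B_0$, a contradiction.) The standard fix is to modify $D$ rather than $S$: replace $E'$ by a slight push-off of $E$, obtaining a new embedded twisting disk for $c$ that meets $K$ in the same two points and meets $S$ in strictly fewer circles --- this needs neither $B_0$ nor irreducibility --- and accordingly take the minimality of $|S\cap D|$ over all twisting disks bounded by $c$ rather than over isotopies of $S$ alone. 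With that adjustment your proof is complete.
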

\begin{proof}
Since $K$ is fibered, $M_K$ is irreducible. Consequently, if $S$ is an essential 2-sphere embedded in $M_L$, then $S$ must bound a 3-ball in $M$ which contains $c$ and is disjoint from $K$. Since $c$ is a crossing circle for $K$, it bounds a disk $D$ in $M$, and a standard innermost circle argument allows us to replace $D$ with a disk disjoint from $S$. This disk is then contained in $B$, and is therefore disjoint from $K$. By definition, this shows that the two-strand twist is nugatory.
\end{proof}

For all of what follows, we therefore assume that $M_L$ is irreducible. Since $K$ and $c$ are null-homologous knots in $M$ with linking number zero (after orienting them in any fashion), we know that $K$ is homologically trivial in the complement of $c$. Thus, we may choose a Seifert surface $F$ for $K$ disjoint from $c$ which is of minimum genus among all such surfaces. Let $F'$ denote the Seifert surface for $K'$ obtained from $F$ by performing the $n$-twist determined by $c$. 

\begin{propn}\label{twistprop}
Let all notation be as above. If the given two-strand $n$-twist is a generalized crossing change (so $n$ is even), then $F$ and $F'$ are fiber surfaces for $K$ and $K'$ (respectively).
\end{propn}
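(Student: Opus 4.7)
My plan is to combine Gabai's theorem on minimum-genus Seifert surfaces in link complements with Proposition \ref{unfib} to promote $F$ from being minimum genus only among Seifert surfaces disjoint from $c$ to being a fiber surface for $K$, and then to verify that the induced $n$-twist on $F$ produces a fiber surface for $K'$ directly.

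First I would invoke Gabai's theorem from \cite{Ga87}. Since $K$ and $c$ are disjoint null-homologous knots with linking number zero and $M_L$ is irreducible by assumption, Gabai's work guarantees the existence of a Seifert surface for $K$ disjoint from $c$ whose genus equals $g(K)$. Hence the surface $F$, chosen to be of minimum genus among Seifert surfaces disjoint from $c$, in fact satisfies $g(F)=g(K)$. Since $K$ is fibered, Proposition \ref{unfib} then implies that $F$ is a fiber surface for $K$.

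Next I would show that the two-strand $n$-twist can be realized as an $n$-twist on $F$ along a twisting arc $\gamma \subset F$. Let $D$ be a twisting disk bounded by $c$. Because $F$ is disjoint from $c=\partial D$, the intersection $F\cap D$ is a collection of arcs and circles in $\mathrm{Int}(D)$, and the endpoints of any arc must lie on $K\cap D$, which consists of two points. Incompressibility of the minimum-genus fiber surface $F$ allows us to remove circle components of $F\cap D$ by innermost-disk isotopies of $F$, while standard innermost-arc moves reduce $F\cap D$ to a single arc $\gamma$ joining the two points of $K\cap D$. After a further isotopy of $F$ supported in a neighborhood of $D$, we may assume that $F$ meets the twisting ball $B$ in precisely the 4-gon configuration of Figure \ref{montesinos1}, so the $n$-twist extends to $F$ and yields a compact surface $F'$ bounded by $K'$.

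Finally I would check that $F'$ is a fiber surface for $K'$. Since $n$ is even, the twisting homeomorphism $\tau_n$ preserves orientability of the 4-gon configuration, so $F'$ is an orientable Seifert surface for $K'$. The twisting is supported in a ball and only rearranges how $F\cap B$ sits inside $B$, so $F'$ is homeomorphic to $F$ and in particular $g(F')=g(F)=g(K)=g(K')$, the last equality holding by hypothesis. Thus $F'$ is of minimum genus, and one more application of Proposition \ref{unfib}, now to the fibered knot $K'$, concludes that $F'$ is a fiber surface for $K'$. The main obstacle is the first step: correctly invoking Gabai to upgrade minimality-among-disjoint-surfaces to genuine minimality requires precisely the irreducibility of $M_L$ that the preceding lemma secures, and this is where the generalized-crossing-change hypothesis (via Kalfagianni's framework) does the essential work.
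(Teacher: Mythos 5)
Your first step contains a genuine gap, and it sits exactly where the paper's proof does its real work. You assert that Gabai's work directly guarantees a Seifert surface for $K$ disjoint from $c$ of genus $g(K)$, so that the surface $F$, chosen to be of minimal genus among Seifert surfaces disjoint from $c$, already realizes $g(K)$. There is no such general theorem, and the statement is false under exactly the hypotheses you invoke (disjoint null-homologous components, linking number zero, $M_L$ irreducible): take $K$ to be the unknot and $c$ a twisting circle with $K\cup c$ a Whitehead link, which is essentially the configuration of the paper's Figure \ref{cosmetic_unknot}. Here $g(K)=0$, but a genus-zero Seifert surface for $K$ disjoint from $c$ would put $c$ inside a ball disjoint from $K$, forcing $K\cup c$ to be split, which it is not. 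Note that this configuration satisfies everything your first step uses; the only hypothesis of the proposition it violates is $g(K)=g(K')$, which your argument does not touch until the very end, and then only to handle $F'$. So $F$ cannot be promoted to a minimal genus Seifert surface for $K$ by citation alone.

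What Corollary 2.4 of \cite{Ga87} actually provides is a statement about Dehn fillings: since $F$ is taut in the irreducible manifold $M_L$, at most one slope on $\partial\eta(c)$ yields a filling in which $F$ fails to remain taut. This is where the evenness of $n$ is essential beyond the orientability of $F'$: only for even $n$ is the two-strand twist realized by Dehn surgery on $c$ (with slope $-2/n$), so that the meridional filling of $M_L$ yields $(M,F)$ with $F$ a Seifert surface for $K$ while the $-2/n$-filling yields $(M,F')$ with $F'$ a Seifert surface for $K'$. These are two distinct slopes, so at least one of $F$, $F'$ remains taut, i.e.\ is of minimal genus for its knot; since $g(F)=g(F')$ and $g(K)=g(K')$, minimality of either forces minimality of both, and Proposition \ref{unfib} then upgrades both to fiber surfaces. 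Your remaining steps (reducing $F\cap D$ to a single arc, orientability and genus of $F'$, the final application of Proposition \ref{unfib}) are fine, but without this two-filling argument the proof does not get off the ground.
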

\begin{proof}
As discussed in Section 2.4, the fact that $n$ is even means that the $n$-twist is realized by performing Dehn surgery along $c$ with slope $-2/n$. Thus, the pairs $(M,F)$ and $(M,F')$ are obtained from $(M_L,F)$ by Dehn filling $M_L$ along $\partial\eta(c)$ in two different ways. By hypothesis on $F$, it is taut in the Thurston norm on $H_2(M_L,\partial\eta(L))$. Since $M_L$ is irreducible, Corollary 2.4 of \cite{Ga87} applies to this situation, and tells us that there is at most one slope $\alpha$ on $\partial\eta(c)$ such that $F$ fails to remain taut in the manifold obtained by Dehn filling $M_L$ along $\partial\eta(c)$ via $\alpha$. It follows that $K$ and $K'$ cannot both bound Seifert surfaces of genus less than $g(F)=g(F')$. Since $g(K)=g(K')$, it follows that $F$ and $F'$ must be minimum genus Seifert surfaces for $K$ and $K'$ (respectively). By Proposition \ref{unfib}, this means that $F$ and $F'$ are fiber surfaces.
\end{proof}

\begin{adhyp}
Justified by Lemma \ref{twistlem} and Proposition \ref{twistprop}, we now assume: 
\begin{enumerate}
\item
	$F$ is a fiber surface for $K$ disjoint from the twisting circle $c$.

\item
	The result $F'$ of performing the $n$-twist on $F$ determined by $c$ is a fiber surface for $K'$.
\end{enumerate}

Let $D$ be the twisting disk for $K$ bounded by $c$. Since $F$ is incompressible and $M_K$ is irreducible, we may apply an innermost circle argument to isotope $F$ (rel boundary) in the complement of $c$ so that $F\cap D$ is reduced to a single arc $\gamma$ which is properly embedded in $F$. By definition, $\gamma$ is a twisting arc corresponding to the given two-strand twist. In the setting of Theorem \ref{mainthm2}, we further assume that $\gamma$ is the specified separating arc in $F$.
\end{adhyp}

To prepare for the central argument, we need to fix some new notation.

\begin{setup}
Recall that $\Sigma(K)$ denotes the cyclic double cover of $M$ branched over $K$. The lift of a manifold $N$ embedded in $M$ to $\Sigma(K)$ will be denoted by $\widetilde{N}$, and similarly for lifts of objects to $\Sigma(K')$. Let $\eta(\widetilde{K})$ be the tubular neighborhood of $\widetilde{K}$ in $\Sigma(K)$ that projects onto the chosen tubular neighborhood $\eta(K)$ of $K$, for which $M_K=\overline{M-\eta(K)}$. Through what follows, let $\widetilde{\Sigma}(K)=\overline{\Sigma(K)-\eta(\widetilde{K})}$.
\end{setup}

Note that since $\partial F=K$, $\widetilde{F}$ is a closed, separating surface in $\Sigma(K)$, and $\eta(\widetilde{K})\cap\widetilde{F}$ is an annulus properly embedded in $\eta(\widetilde{K})$. Since $\gamma$ is properly embedded in $F$, $\widetilde{\gamma}$ is a simple closed curve in $\widetilde{F}$.

A key fact is that the fibration of $M_K$ by parallel copies of $F$ induces a fibration of $\widetilde{\Sigma}(K)$ for which $\widetilde{F}\cap\widetilde{\Sigma}(K)$ consists of two disjoint fibers. By our choice of $\eta(\widetilde{K})$, each fiber in this fibration projects homeomorphically onto a copy of $F$ in $M_K$, so we may naturally identify these fibers with $F$ itself. Further, if $\phi$ is a monodromy for $K$, then we may identify $\widetilde{\Sigma}(K)$ with $F\times[-1,1]/\phi^2$ so that $\widetilde{F}\cap \widetilde{\Sigma}(K)$ corresponds to the union of $F\times\{-1\}$ with $F\times\{0\}$. This all arises from the fact that $\widetilde{\Sigma}(K)$, the 2-fold cyclic cover of $M_K$, can be constructed by taking two copies of $M_K$ cut along $F$ and gluing them together end-to-end.

Recall that $K'$ is also fibered with fiber surface $F'$, which is obtained from $F$ by performing an $n$-twist along $\gamma$. Through this identification of $F'$ with $F$, we may view the monodromy for $K'$ as an orientation-preserving homeomorphism $\psi:F\rightarrow F$. By the above reasoning, letting $\widetilde{K'}$ denote the lift of $K'$ to $\Sigma(K')$, we obtain a natural identification of $\widetilde{\Sigma}(K')$ with $F\times[-1,1]/\psi^2$. The following result is the heart of our argument.

\begin{thm}\label{corethm}
Let $K$ and $K'$ be two fibered knots in $M$ of the same genus related by a two-strand $n$-twist, as above, with corresponding monodromies $\phi$ and $\psi$. Suppose that the two-strand twist extends to an $n$-twist on a fiber surface $F$ for $K$ along the arc $\gamma$, so that the resulting surface $F'$ is a fiber surface for $K'$. Then, if $|n|>1$, there is a simple closed curve $\alpha$ in $F$ such that: 
\begin{description}
\item{(a) }
	When $\alpha$ is viewed as lying in $F\times\{-1\}$ within $\widetilde{\Sigma}(K)=F\times[-1,1]/\phi^2$, the curve $\widetilde{\gamma}$ is isotopic to $\alpha$ in $\Sigma(K)$.

\item{(b) }
	$[T^n_{\alpha}][\phi^2]=[h][\psi^2][h^{-1}]$ in $\mbox{MCG}(F)$ for some orientation-preserving homeomorphism \\ $h:F\rightarrow F$ which restricts to the identity on $\partial F$.
\end{description}
\end{thm}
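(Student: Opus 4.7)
The plan is to reinterpret the two-strand twist as a Dehn surgery in the branched cover via the Montesinos trick (Lemma \ref{montlem}), decompose $\Sigma(K)$ along $\widetilde{F}$ into product pieces, and apply Ni's theorem (Theorem \ref{nithm}). By Lemma \ref{montlem}, $\Sigma(K')$ is obtained from $\Sigma(K)$ by Dehn surgery on $\widetilde{\gamma}$ with slope $1/n$ with respect to the surface framing from $\widetilde{F}$. Because $K$ is fibered, the surface exterior $M_F$ is a product $F\times I$; lifting to the branched cover, the closed separating surface $\widetilde{F}$ cuts $\Sigma(K)$ into two pieces $X$ and $Y$, each homeomorphic to $F\times I$. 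The analogous decomposition of $\Sigma(K')$ along $\widetilde{F'}$ also yields two copies of $F\times I$, and since the $n$-twist is supported in a small ball we can identify $F$ with $F'$ outside this ball to match the two decompositions: $\Sigma(K')=X'\cup Y$, where $X'$ is the result of performing the surgery in $X$ and $Y$ is unchanged.

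Next I would push $\widetilde{\gamma}$ slightly off $\widetilde{F}$ into the interior of $X\cong F\times I$ and apply Theorem \ref{nithm}. The two $F$-boundary pieces of $\partial X$ correspond to fibers of the fibration of $\widetilde{\Sigma}(K)$, and similarly for $\partial X'$; thus the surgery takes $X$ to $X'\cong F\times I$ while preserving these two pieces. Since the surgery slope has meridional intersection $|n|>1$, Ni's case (a) (which forces meridional intersection $1$) is excluded, so case (b) applies: $\widetilde{\gamma}$ is isotopic in $X$ to $\alpha\times\{t_0\}$ for some simple closed curve $\alpha\subset F$ and $t_0\in I$, with the surgery slope equal to $1/n$ with respect to the blackboard framing. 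A direct comparison shows that the surface framing from $\widetilde{F}$ and the blackboard framing from the projection to $F$ represent the same longitude on $\partial\eta(\widetilde{\gamma})$, so the two powers of $n$ match.

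For (a), $\alpha\times\{t_0\}$ is isotopic within the product $X$ to a copy of $\alpha$ in one of its $F$-boundary fibers, which we may take to be $F\times\{-1\}$ inside $\widetilde{\Sigma}(K)$. For (b), I would invoke the classical identification of $1/n$ surgery on $\alpha\times\{t_0\}\subset F\times I$ (with blackboard framing) with the operation of cutting along $F\times\{t_0\}$ and regluing via the Dehn twist $T_\alpha^n$. Reassembling $\widetilde{\Sigma}(K')$ from $X'$ and $Y$ with the unchanged outer $\phi^2$-identification, the resulting mapping torus has squared monodromy $T_\alpha^n\phi^2$. Since the fibered structure of $K'$ also gives $\widetilde{\Sigma}(K')=F\times I/\psi^2$, the uniqueness of monodromies up to conjugation in $\mbox{MCG}(F)$ (the fiber-preserving argument behind Proposition \ref{unmon}(b)) then yields an orientation-preserving homeomorphism $h:F\to F$ fixing $\partial F$ pointwise with $[T_\alpha^n\phi^2]=[h][\psi^2][h^{-1}]$. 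The main obstacle I anticipate is the careful framing and orientation bookkeeping needed to verify that the two $n$'s agree exactly (and with the right sign) and that the surgery/Dehn-twist correspondence produces the composition on the stated side; these are technical rather than deep issues.
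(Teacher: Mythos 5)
Your outline is the same as the paper's: convert the twist to a $1/n$ surgery on $\widetilde{\gamma}$ via Lemma \ref{montlem}, cut $\Sigma(K)$ into a product piece containing $\widetilde{\gamma}$, apply Theorem \ref{nithm} to force $\widetilde{\gamma}$ onto a fiber, and then use the surgery-equals-twist-the-monodromy correspondence together with uniqueness of monodromies up to conjugation. The gap is in where you apply Ni's theorem. You take $X$ to be a Heegaard handlebody of the splitting $V_0\cup_{\widetilde{F}}V_1$ induced by the fibration, so the two distinguished ``$F$-boundary fibers'' of $X\cong F\times I$ are the two halves of the single closed surface $\widetilde{F}=F_{-1}\cup_{\widetilde{K}}F_0$, which meet along the branch locus, and the surgery curve sits right on this surface. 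To invoke Theorem \ref{nithm} you must exhibit a homeomorphism $X'\to F\times I$ carrying $F_{-1}$ and $F_0$ to $F\times\{0\}$ and $F\times\{1\}$ \emph{respectively}. Your justification --- match the decomposition $X'\cup Y$ of $\Sigma(K')$ with the decomposition along $\widetilde{F'}$ --- does not deliver this: $\partial X'$ is the untouched old surface $\widetilde{F}$, not $\widetilde{F'}$, and the natural product identification of the surgered collar restricts on one end to $T^{\pm n}_{\widetilde{\gamma}}$. Since $\widetilde{\gamma}$ crosses $\widetilde{K}$ and hence meets both $F_{-1}$ and $F_0$, that twist mixes the two designated subsurfaces, so the homeomorphism you get does not preserve them, and it is not clear that any other one does. (A related symptom: pushing $\widetilde{\gamma}$ into the interior of $V_0$ destroys equivariance under the covering involution, which swaps $V_0$ and $V_1$, so the relation between the surviving $\widetilde{F}$ and a fiber surface for $K'$ becomes genuinely murky.)

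The paper avoids exactly this by applying Ni's theorem to a different product: the exterior $W$ of a parallel copy $\widehat{F}$ of the fiber pushed to level $-1/2$ inside $V_0$ and shrunk to be disjoint from the lifted twisting ball $\widetilde{B}$. Then $W\cong\widehat{F}\times[0,1]$ because $\widehat{F}$ is a fiber surface for a fibered knot in $\Sigma(K)$, the surgery happens away from $\partial W$, and $W'$ is the exterior of $\widehat{F}'$, which (by the gluing-map interpretation of the surgery, Lemma \ref{gluelem}) is isotopic to a fiber surface for $\widetilde{K'}$ --- so $W'$ is again a product with the product structures visibly agreeing on the untouched boundary. That is what licenses Theorem \ref{nithm}. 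To repair your argument, replace $X=V_0$ by this fiber-surface exterior (or supply a separate proof that $(X',F_{-1},F_0)$ is a product triple, which is essentially the same work). The remaining steps of your proposal --- excluding Ni's case (a) because the slope is non-integral, matching the two framings, converting $1/n$ surgery on $\alpha$ in a fiber into composition of the monodromy $\phi^2$ with $T^n_{\alpha}$, and concluding by the conjugacy of monodromies --- coincide with the paper's and are fine.
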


To prove this theorem, we consider the Heegaard splitting $\Sigma(K)=V_0\cup_SV_1$ of $\Sigma(K)$ induced by the fibration of $\widetilde{\Sigma}(K)$ described above, as defined in Section 2.1.2. Our conventions have been chosen so that the Heegaard surface $S$ is precisely the lift $\widetilde{F}$ of the distinguished fiber surface $F$ for $K$. We likewise consider the Heegaard splitting $\Sigma(K')=V'_0\cup_{\widetilde{F'}}V'_1$ induced by the fibration of $\widetilde{\Sigma}(K')$. 

By Lemma \ref{montlem}, $\Sigma(K')$ is obtained from $\Sigma(K)$ by Dehn surgery along $\widetilde{\gamma}$ with slope $1/n$, when parametrized with respect to the framing of $\widetilde{\gamma}$ determined by $\widetilde{F}$. Since the given two-strand twist acts as an $n$-twist transforming $F$ into $F'$, this surgery transforms $\widetilde{F}$ into a copy of the Heegaard surface $\widetilde{F'}$ for $\Sigma(K')$. A key observation is that this transformation may be reinterpreted in terms of gluing maps for the two Heegaard splittings. This correspondence is utilized by Kalfagianni in the context of Heegaard splittings of $S^3$: see Lemma 5.5 of \cite{Kal12}. Since our conventions and terminology differ from hers, we provide a complete description.

For a Heegaard splitting $V\cup_SW$ of an oriented 3-manifold, where $V$ and $W$ are viewed as copies of a single handlebody with boundary $S$, we view a gluing map for the splitting as an orientation-reversing homeomorphism $h:S\rightarrow S$ such that $M$ is formed from $V$ and $W$ by identifying $x\in\partial V$ with $h(x)\in\partial W$. Here, we take $S$ to be oriented so that its positive normal vector points into $W$ everywhere. If $g:S\rightarrow S$ is an orientation-preserving homeomorphism, we use $V\cup_{g(S)}W$ to denote the Heegaard spitting (likely of a different 3-manifold) whose gluing map is $g\circ h$.

\begin{lem}\label{gluelem}
Let all notation be as above. Then $V_0\cup_{T^n_{\widetilde{\gamma}}(\widetilde{F})}V_1$ is a Heegaard splitting of $\Sigma(K')$ which is equivalent to $V'_0\cup_{\widetilde{F'}}V'_1$. Further, these Heegaard splittings are related by an orientation-preserving homeomorphism which takes $T^n_{\widetilde{\gamma}}(\widetilde{K})\subset\partial V_1$ to $\widetilde{K'}\subset\partial V'_1$ and is fiber-preserving on the exteriors of these knots.
\end{lem}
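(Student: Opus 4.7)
The plan is to realize $\Sigma(K')$ as a local twist modification of $\Sigma(K)$ supported in a bicollar of $\widetilde{F}$, and then to identify the resulting Heegaard splitting with the canonical $V'_0\cup_{\widetilde{F'}}V'_1$ by using that the $n$-twist in $M$ is supported in a tightly controlled 3-ball. First I would apply Lemma \ref{montlem} to express $\Sigma(K')$ as Dehn surgery on $\widetilde{\gamma}\subset\widetilde{F}\subset\Sigma(K)$ with slope $1/n$ measured against the framing induced by $\widetilde{F}$. Because $\widetilde{\gamma}$ sits on $\widetilde{F}$ and carries its own surface framing there, this $1/n$-surgery admits a twist description: inside a bicollar $\widetilde{F}\times[-\epsilon,\epsilon]\subset\Sigma(K)$ it is equivalent to cutting $\Sigma(K)$ along $\widetilde{F}\times\{0\}$ and regluing by the $n$-fold Dehn twist $T^n_{\widetilde{\gamma}}$ on one side. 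I would verify this by comparing the two operations on $\partial\eta(\widetilde{\gamma})$: both send a meridian to $\widetilde{\mu}+n\widetilde{\lambda}$ in the coordinates $(\widetilde{\mu},\widetilde{\lambda})$ given by the meridian and the $\widetilde{F}$-longitude. Since the regluing is supported in the interior of this bicollar, it leaves $V_0$ and $V_1$ intact as submanifolds of $\Sigma(K')$ meeting along the new surface $T^n_{\widetilde{\gamma}}(\widetilde{F})$, yielding the claimed Heegaard splitting of $\Sigma(K')$.

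To identify this splitting with $V'_0\cup_{\widetilde{F'}}V'_1$, I would exploit the locality of the $n$-twist in $M$: it is supported inside the 3-ball $B$ of Section 2.4, and outside $B$ the triples $(M,K,F)$ and $(M,K',F')$ literally coincide. Lifting to branched double covers, the manifolds $\Sigma(K)$ and $\Sigma(K')$ are canonically identified away from $\eta(\widetilde{\gamma})$, and under this identification parallel copies of $F$ in $M_K$ correspond to parallel copies of $F'$ in $M_{K'}$; hence the induced fibrations of $\widetilde{\Sigma}(K)$ and $\widetilde{\Sigma}(K')$ match outside $\eta(\widetilde{\gamma})$. Extending this canonical identification by a controlled homeomorphism of the surgery solid torus produces an orientation-preserving homeomorphism from the first splitting of $\Sigma(K')$ to the canonical one which carries $T^n_{\widetilde{\gamma}}(\widetilde{F})$ to $\widetilde{F'}$, sends $T^n_{\widetilde{\gamma}}(\widetilde{K})$ to $\widetilde{K'}$, and restricts to a fiber-preserving map on the knot exteriors.

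The main obstacle I anticipate is bookkeeping. I need to pin down sign conventions so that the exponent of the Dehn twist matches the sign of $n$ prescribed by Definition \ref{maindef} and Lemma \ref{montlem}, and to track the handlebody labels $V_0\leftrightarrow V'_0$, $V_1\leftrightarrow V'_1$ consistently through both steps. Once the twist-versus-surgery correspondence of the first step is fixed with the correct sign, the global homeomorphism in the second step should follow essentially for free, since outside the surgery solid torus everything is canonically identified by the local nature of the twist, and the only remaining freedom is how to extend across the surgery solid torus, which can be chosen to match the twist-image of the knot arcs with the corresponding arcs of $\widetilde{K'}$ on $\widetilde{F'}$.
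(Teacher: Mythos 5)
Your overall strategy is the same as the paper's: both proofs rest on the correspondence between modifying the gluing map of $V_0\cup_{\widetilde{F}}V_1$ by $T^n_{\widetilde{\gamma}}$ and performing $1/n$-surgery on $\widetilde{\gamma}$ with respect to the $\widetilde{F}$-framing, combined with Lemma \ref{montlem} to recognize the result as $\Sigma(K')$. The paper realizes this correspondence concretely by inserting a collar $N(\widetilde{F})\cong\widetilde{F}\times[0,1]$ inside $V_1$ and observing that regluing $A\times[0,1]$ (for $A$ an annular neighborhood of $\widetilde{\gamma}$) is exactly the stated surgery, whereas you propose to check it by comparing slopes on $\partial\eta(\widetilde{\gamma})$; these are the same argument in substance, and your first paragraph is fine.

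The gap is in your treatment of the ``further'' clause. You assert that, under the canonical identification of $\Sigma(K)$ and $\Sigma(K')$ away from $\eta(\widetilde{\gamma})$, ``parallel copies of $F$ in $M_K$ correspond to parallel copies of $F'$ in $M_{K'}$; hence the induced fibrations \ldots match outside $\eta(\widetilde{\gamma})$,'' and that the fiber-preserving homeomorphism then comes ``essentially for free.'' This is too strong as stated: the fibration of $M_{K'}$ with fiber $F'$ that defines $V'_0\cup_{\widetilde{F'}}V'_1$ is only determined up to isotopy, and there is no a priori reason its fibers agree pointwise with those of the $K$-fibration outside the twisting ball. What one actually gets from the construction is a homeomorphism of $\Sigma(K')$ carrying $T^n_{\widetilde{\gamma}}(\widetilde{F})$ to $\widetilde{F'}$ and respecting the decomposition of each into two copies of the fiber surface; the paper then invokes Lemma 3.5 of \cite{Wa68} to isotope this into a genuinely fiber-preserving homeomorphism of the knot exteriors. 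Some such isotopy argument (Waldhausen's lemma, or the uniqueness of fibrations up to fiber-preserving isotopy rel a prescribed fiber) is needed to close your second step; without it the fiber-preserving conclusion, which is used essentially in the proof of Theorem \ref{corethm}, is not established.
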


\begin{proof}
Up to orientation-preserving homeomorphism, we can construct $V_0\cup_{T^n_{\widetilde{\gamma}}(\widetilde{F})}V_1$ from $\Sigma(K)=V_0\cup_{\widetilde{F}}V_1$ as follows. Choose a small collar $N(\widetilde{F})\cong\widetilde{F}\times[0,1]$ on $\widetilde{F}$ in $V_1$, so that $\widetilde{F}$ is identified with $\widetilde{F}\times\{0\}$. Let $h:\widetilde{F}\rightarrow\widetilde{F}$ be the gluing map for the splitting $V_0\cup_{\widetilde{F}} V_1$. Instead of forming $V_0\cup_{T^n_{\widetilde{\gamma}}(\widetilde{F})}V_1$ by gluing $V_0$ to $V_1$ via $T^{\pm n}_{\widetilde{\gamma}}h$, we can first glue $V_0$ to $N(\widetilde{F})$ along $\widetilde{F}\times\{0\}$ via $h$, and then glue $N(\widetilde{F})$ to $\overline{V_1-N(\widetilde{F})}$ via $T^n_{\widetilde{\gamma}'}:\widetilde{F}\times\{1\}\rightarrow\widetilde{F}\times\{1\}$, where $\widetilde{\gamma}'=\widetilde{\gamma}\times\{1\}$.

This process is equivalent to taking a small annular neighborhood $A$ of $\widetilde{\gamma}$ in $\widetilde{F}$, removing $A\times[0,1]\subset N(\widetilde{F})$ from $\Sigma(K)$, and gluing it back in by the map indicated in Figure \ref{twist_surg}. This is precisely the operation of performing Dehn surgery along $\widetilde{\gamma}\times\{1/2\}$ with slope $1/n$, as parametrized with respect to the surface framing. Observe that this operation produces a new Heegaard splitting $[V_0\cup N(\widetilde{F})]\cup_{\widetilde{F}\times\{1\}}[\overline{V_1-N(\widetilde{F})}]$ of $\Sigma(K')$. This is evidently related to the original splitting $V_0\cup_{T^n_{\widetilde{\gamma}}(\widetilde{F})}V_1$ by an orientation-preserving homeomorphism which identifies $T^n_{\widetilde{\gamma}}(\widetilde{K})$ with $T^n_{\widetilde{\gamma}'}(\widetilde{K}\times\{1\})$, as well as the fibrations of the exteriors of these knots. 

This construction therefore recovers $\Sigma(K')$ by Lemma \ref{montlem}, and turns any copy of $\widetilde{F}$ into one of $\widetilde{F'}$. Further, the natural copy of $\widetilde{K}$ in $\widetilde{F}\times\{t\}$ intersecting $A\times\{t\}$ in two properly embedded arcs becomes isotopic to a copy of $\widetilde{K'}$, within $\widetilde{F}\times\{t\}$, after performing the surgery described above. These identifications give rise to an orientation-preserving homeomorphism $\Sigma(K')\rightarrow\Sigma(K')$ which takes $\widetilde{F}\times\{1\}$ to $\widetilde{F'}$, $V_0\cup_{\widetilde{F}}N(\widetilde{F})$ to $V'_0$, and $T^n_{\widetilde{\gamma'}}(\widetilde{K})$ to $\widetilde{K'}$. This homeomorphism necessarily respects the decompositions of $\widetilde{F}\times\{1\}$ and $\widetilde{F'}$ into two copies of the fiber surface for these knots, so by Lemma 3.5 of \cite{Wa68}, it may be isotoped so that it becomes an entirely fiber-preserving homeomorphism from the exterior of $T^n_{\widetilde{\gamma'}}(\widetilde{K})$ to $\widetilde{\Sigma}(K')$. Combined with the last observation of the previous paragraph, this completes the proof. 
\end{proof}

\begin{figure}[t]
	\centering
	\includegraphics[scale=0.6]{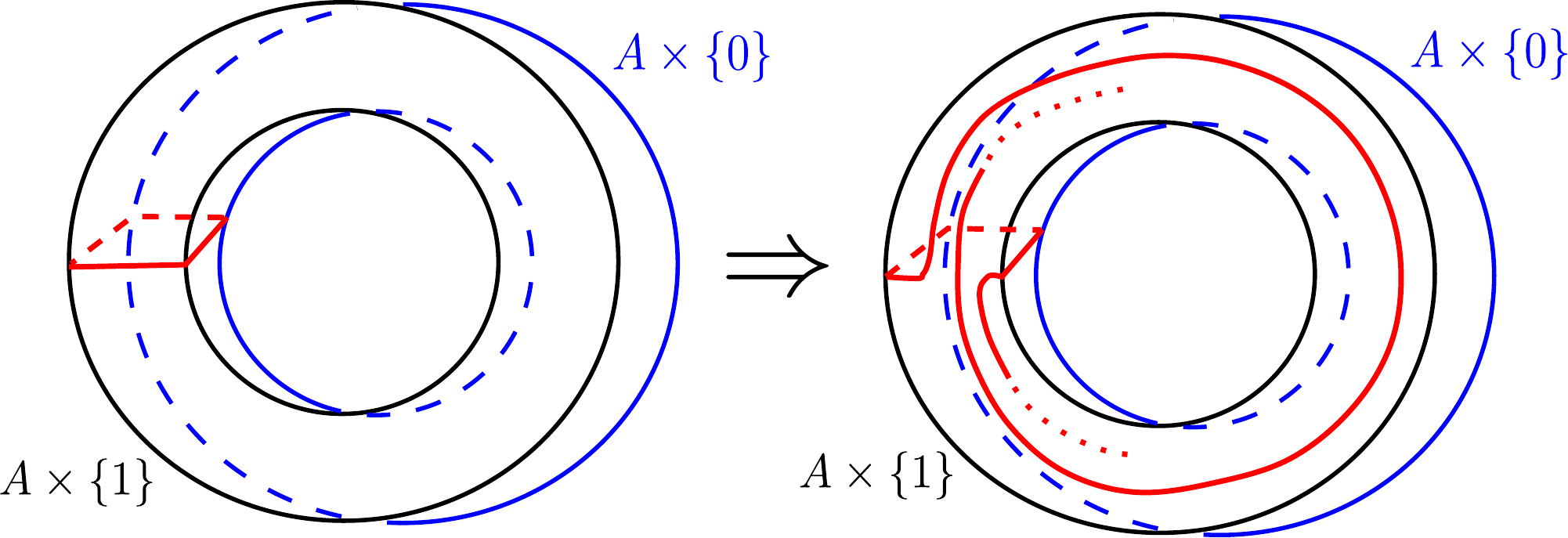}
	\caption{Realizing a change in a gluing map by an $n$-fold left Dehn twist via Dehn surgery. On the right side, the surgery curve intersects the annulus $A\times\{1\}$ in an arc which wraps around the annulus $n$ times.}
	\label{twist_surg}
\end{figure}

\begin{proof}[of Theorem~{\rm\ref{corethm}}]
Within $\Sigma(K)=V_0\cup_{\widetilde{F}}V_1$, we assume that $V_0$ consists of \\ $F\times[-1,0]\subset\widetilde{\Sigma}(K)$ together with part of the tubular neighborhood $\eta(\widetilde{K})$, as in Section 2.1.2. Consider the subsurface $\widehat{F}$ of $F\times\{-1/2\}$ formed by shrinking it slightly to be disjoint from the lift $\widetilde{B}$ of the 3-ball in which the two-strand twist occurs. There is then a natural bicollar $N(\widehat{F})\cong\widehat{F}\times[-1,0]$ on $\widehat{F}$ in $V_0$ disjoint from $\widetilde{B}$. Since $\widehat{K}=\partial\widehat{F}$ is a fibered knot in $\Sigma(K)$ (isotopic to $\widetilde{K}$) with fiber surface $\widehat{F}$, the surface exterior $W=\overline{\Sigma(K)-N(\widehat{F})}$ is homeomorphic to $\widehat{F}\times[0,1]$, so that $\widehat{F}\times\{0,1\}$ is identified with the two natural copies of $\widehat{F}$ in $\partial N(\widehat{F})$.

At the same time, under the identification $\Sigma(K')\cong V_0\cup_{T^n_{\widetilde{\gamma}}(\widetilde{F})}V_1$ from Lemma \ref{gluelem}, $\widehat{F}$ gives rise to a surface $\widehat{F}'$ in $V_0\subset\Sigma(K')$ whose exterior $W'=\overline{\Sigma(K')-N(\widehat{F}')}$ is obtained from $W$ by the  $1/n$-sloped Dehn surgery along $\widetilde{\gamma}$ corresponding to the Dehn twist added to the gluing map. Further, by our construction and the second statement of Lemma \ref{gluelem}, $\widehat{F}'$ is isotopic in $V_0$ to a fiber surface for $K'$.

This implies that $W'$ is homeomorphic to $\widehat{F}'\times[0,1]$, so that the product structure agrees with that of $W$ on the boundary under the natural identification between $\widehat{F}$ and $\widehat{F}'$. By Theorem \ref{nithm}, it follows that the surgery curve $\widetilde{\gamma}$ is a 0 or 1-crossing knot in $W$. The surgery slope is non-integral, due to the fact that $|n|>1$, so Theorem \ref{nithm} further implies that $\widetilde{\gamma}$ must be a 0-crossing knot. This means that $\widetilde{\gamma}$ is isotopic in $W'$ to a simple closed curve $\alpha$ in $\widehat{F}\times\{-1\}\subset\partial N(\widehat{F})$. The last piece of information arising from Ni's theorem is that the corresponding surgery slope along $\alpha$ is $1/k$ for some integer $k\neq 0$, with respect to the framing of $\alpha$ determined by $F\times\{-1\}$. Since $|n|>1$ and the original surgery slope on $\widetilde{\gamma}$ is $1/n$ with respect to the surface framing given by $\widetilde{F}$, it follows that $k=n$.

Since $\widehat{F}'\times\{-1\}$ is a slightly deformed copy of a fiber surface for $\widetilde{K'}$, we may instead view $\alpha$ as contained in $F\times\{-1\}$ within $M_{K'}\cong F\times[-1,1]/\psi^2$. Statement (b) of Theorem \ref{corethm} now arises from a correspondence between modifications of monodromies by Dehn twists and certain Dehn surgeries, along the exact same lines of the correspondence contained in Lemma \ref{gluelem}. Let $N=F\times[-1,1]/h$ for some orientation-preserving homeomorphism $h:F\rightarrow F$. If there is a curve $\alpha\subset F$ such that $N'$ is obtained from $N$ by Dehn surgery along $\alpha\times\{-1\}$ of slope $1/n$ with respect to the surface framing, then $N'=F\times[-1,1]/T^n_{\alpha}h$. Since this is essentially contained in Theorem 1.4 of \cite{Ni11} and its proof is just like that of Lemma \ref{gluelem}, we do not provide further details here.

Together with our previous observations, this means that $\widetilde{\Sigma}(K')\cong F\times[-1,1]/T^n_{\alpha}\phi^2$, since $\phi^2$ is the monodromy for $\widetilde{K}$. Thus, both $T^n_{\alpha}\phi^2$ and $\psi^2$ are monodromies for the same fibration of $\widetilde{\Sigma}(K')$. As remarked following Proposition \ref{unmon}, the equivalence of parts (a) and (b) of that proposition holds regardless of the topology of the ambient closed 3-manifold (here, $\Sigma(K')$), so it follows that $T^n_{\alpha}\phi^2$ and $\psi^2$ represent conjugate elements of $\mbox{MCG}(F)$. This establishes statement (b) of Theorem \ref{corethm}, and therefore completes the proof.
\end{proof}

\begin{proof}[of Theorem~{\rm\ref{mainthm1}} and Theorem~{\rm\ref{mainthm2}}]
So far, the only hypotheses we have taken on $K$ and $K'$ are that they are fibered knots in a rational homology sphere $M$ of the same genus, related either by a generalized crossing change or a two-strand $n$-twist with $|n|>1$ satisfying the twisting arc hypothesis of Theorem \ref{mainthm2}. As discussed following Proposition \ref{twistprop}, that proposition together with Lemma \ref{twistlem} enable us to assume that this two-strand twist acts as an $n$-twist on a fiber surface $F$ for $K$ along a properly embedded arc $\gamma$, so that the resulting surface $F'$ is a fiber surface for $K'$. Let $\phi:F\rightarrow F$ denote a monodromy for $K$ and $\psi$ denote one for $K'$, which (as above) is also viewed as a  homeomorphism $F\rightarrow F$.

We finally assume that $K$ and $K'$ are isotopic knots in $M$. By Proposition \ref{unmon}, this implies that there is an orientation-preserving homeomorphism $g:F\rightarrow F$, restricting to the identity on $\partial F$, such that $[\psi]=[g][\phi][g^{-1}]$ in $\mbox{MCG}(F)$. It follows that $[\psi^2]=[g][\phi^2][g^{-1}]$. Since $|n|>1$, Theorem \ref{corethm} then tells us that $[T^n_{\alpha}][\phi^2]=[hg][\phi^2][g^{-1}h^{-1}]$ for some simple closed curve $\alpha$ in $F$ and homeomorphism $h:F\rightarrow F$ satisfying the same properties as $g$. Further, $\alpha$ is isotopic to the lift $\widetilde{\gamma}$ of the twisting arc $\gamma$ to $\Sigma(K)$.

This means that $[T^n_{\alpha}]=[hg][\phi^2][(hg)^{-1}][\phi^{-2}]$, so that $T^n_{\alpha}$ represents a commutator in $\mbox{MCG}(F)$. By Corollary \ref{kotcor}, it follows directly that $\alpha$ is homotopically trivial in $F$, and therefore that the isotopic curve $\widetilde{\gamma}$ is unknotted in $\Sigma(K)$. Since $|n|>1$, we may now apply Proposition \ref{liftprop} to conclude that the given two-strand $n$-twist on $K$ must be nugatory.

In particular, this shows that a generalized crossing change from a fibered knot in $M$ to an isotopic knot must be nugatory, which is precisely the statement of Theorem \ref{mainthm1}. The remaining content of what we just proved is as follows: if two isotopic fibered knots in $M$ are related by a two-strand $n$-twist with $|n|>1$, the twisting circle is disjoint from a fiber surface $F$ for one of them, and a corresponding twisting arc lies in and separates $F$, then this twist must be nugatory.

In other words, if we are under the hypotheses of Theorem \ref{mainthm2} and a two-strand $n$-twist on $K$ determined by $c$ is cosmetic, then $n=\pm 1$. The fact that the two-strand twist can be cosmetic for at most one of these values of $n$ is now a consequence of Theorem \ref{mainthm1}. Note that for any knot, the two-strand $-1$ and $1$-twists determined by the same twisting circle are related by a standard crossing change. Thus, if both such twists on $K$ determined by $c$ are isotopic to $K$, then both must be nugatory by Theorem \ref{mainthm1}. We therefore conclude that the two-strand twist on $K$ determined by $c$ is cosmetic for at most one integer, which must be among -1 and 1. This is precisely the claim of Theorem \ref{mainthm2}.
\end{proof}

\begin{rmk}
It should be noted that Buck, Ishihara, Rathbun, and Shimokawa have previously applied Theorem \ref{nithm} to give a complete classification of generalized crossing changes on fibered knots in $S^3$ which preserve genus \cite{BIRS16}. A curious reader is referred to Theorem 5 and Corollary 5 of their paper. Theorem \ref{corethm} certainly does not do this, but provides the information needed for our purposes.

In an earlier stage of this work, the author realized that the results of \cite{BIRS16} can be combined with Corollary \ref{kotcor} to quickly prove Theorem \ref{mainthm1} in the case of generalized crossing changes of order $\geq 2$ within $M=S^3$. It is unclear if their work may lead to an alternative proof of Theorem \ref{mainthm1} in full. It cannot, however, be used to say anything about two-strand $n$-twists for odd $n$. 
\end{rmk}

\section{Two-strand twists of odd order}

Define the \textit{order} of a two-strand $n$-twist on a knot $K$ to be $|n|$. The purpose of this final section is two-fold. For one, we show why results on generalized crossing changes should not generally be expected to extend to two-strand twists of odd orders. In Section 4.1, we exhibit two simple but noteworthy examples of cosmetic two-strand twists of orders 1 and 5. They show that the direct extension of Conjecture \ref{ccc} to two-strand $n$-twists is false when $n$ is odd, both when $n=\pm 1$ and for fibered knots. Further, these examples will reveal that there is no simple way to weaken the hypotheses of Theorem \ref{mainthm2}, and that the hypothesis $|n|\geq 2$ cannot be removed from Proposition \ref{liftprop}. 

The second purpose of this section is to provide a closer look at the interesting case of two-strand twists of order 1. These are precisely the types of two-strand twists that can be viewed as \textit{non-coherent band surgeries}, as discussed in \cite{IJ18} and \cite{MV18}. Our first example in Section 4.1 is of a certain two-strand $-1$-twist on the unknot in $S^3$. As we will discuss there, this is an instance of non-trivial purely cosmetic band surgery, in the sense of `non-trivial' that appears to be intended in \cite{IJ18}. However, in Section 4.2, we make the case that this example should be viewed as `trivial' in a broader sense, which we refer to as \textit{weakly nugatory}.

After making this notion precise in Definition \ref{weaknug}, we show that Proposition \ref{liftprop} becomes true in the case $n=\pm 1$ if one replaces `nugatory' with `weakly nugatory' in its conclusion. This provides a new avenue for using the Montesinos trick to study purely cosmetic band surgeries. As an immediate application, we prove Theorem \ref{mainthm3}. Following the proof, we discuss the difficulty in using Proposition \ref{liftprop} to extend the reasoning of Section 3 to improve the conclusion of Theorem \ref{mainthm2} when $n=\pm 1$.

\subsection{Examples}

Our two examples of cosmetic two-strand twists of odd orders are related to each other, and easy to describe. In the first example, we take time to discuss what it shows in regard to a question about band surgeries on knots posed in \cite{IJ18}. We refer the reader to that paper and Section 3 of \cite{MV18} for standard definitions surrounding band surgery.

\begin{ex}
	Consider the two-strand $-1$-twist on the unknot $K$ shown in Figure \ref{cosmetic_unknot}. The resulting knot is evidently another unknot. One simple way to see that this twist is not nugatory is to observe that performing the two-strand 2-twist on $K$ determined by the same twisting circle $c$ produces the figure-eight knot $J$ shown on the left side of Figure \ref{cosmetic_fig8}. Thus, $c$ cannot bound a disk in the complement of $K$, as $J$ would be isotopic to $K$ otherwise. 
	
	\begin{figure}[t]
		\centering
		\includegraphics[scale=0.5]{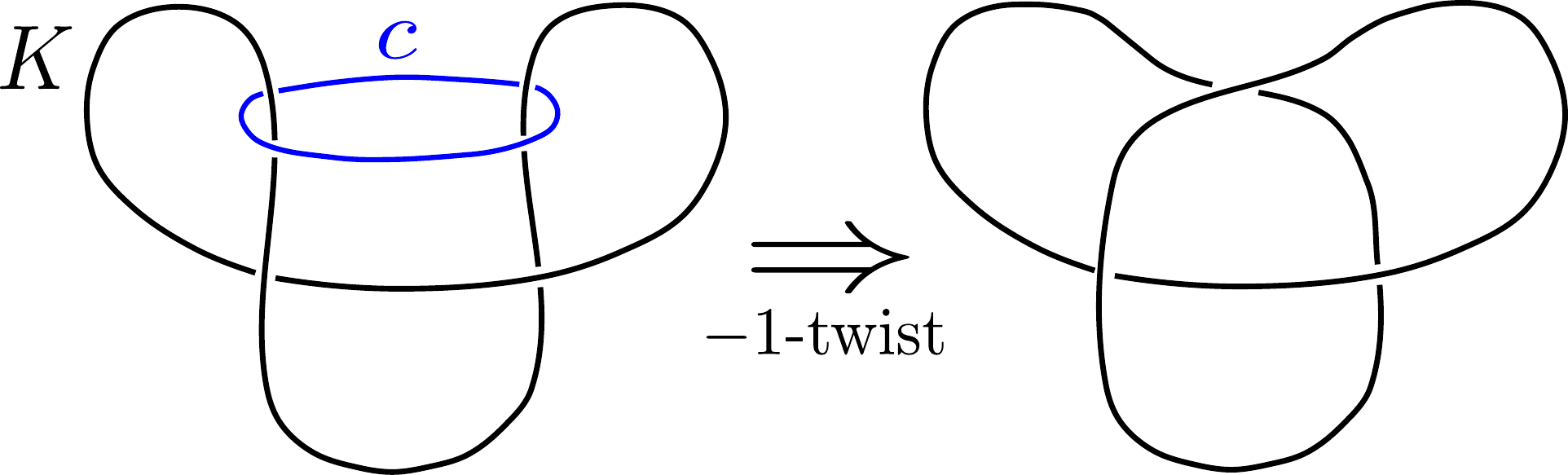}
		\caption{A cosmetic two-strand $-1$-twist on the unknot in $S^3$.}
		\label{cosmetic_unknot}
	\end{figure}

	When interpreted in the language of band surgery, this provides an example of what appears to be a previously unobserved phenomenon. As mentioned before, two-strand twists of order one are instances of \textit{non-coherent} band surgeries, which transform a knot into another knot (as opposed to a 2-component link). In the introduction of \cite{IJ18}, it is noted that there are no previously known examples of non-trivial \textit{purely cosmetic} band surgeries, which are non-coherent band surgeries taking a knot to an isotopic knot. The meaning of `trivial' provided there is `when the band is half-twisted and parallel to the link.'
	
	In our language of two-strand $\pm 1$-twists, the best interpretation of this is illustrated in Figure \ref{trivial_band}: there should be a bigon $E$ embedded in $M$ with interior disjoint from $K$ and $\partial E=\alpha\cup\gamma$, where $\alpha$ is an arc in $K$ and $\gamma$ is a twisting arc corresponding to the given $\pm 1$-twist. In this case, it follows right away that the twist is nugatory. Of course, the converse is not true: by starting with such a trivial banding on a non-trivial knot and adding local knotting to the distinguished arc $\alpha$, we obtain nugatory two-strand $\pm 1$-twists on composite knots that are not trivial in this sense.
	
	\begin{figure}[b]
		\centering
		\includegraphics[scale=0.65]{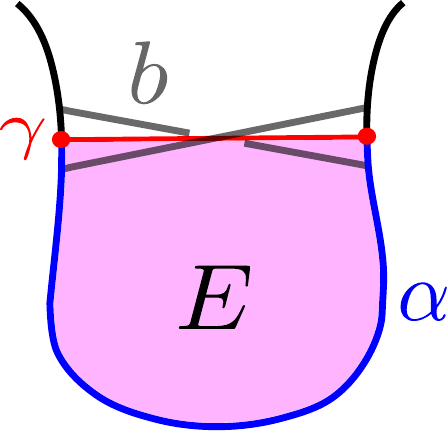}
		\caption{An instance of trivial non-coherent band surgery.}
		\label{trivial_band}
	\end{figure}
	
	As we imagine that the authors of \cite{IJ18} would not regard such an example as truly non-trivial, we define a non-coherent band surgery on a knot $K$ to be \textit{trivial} if it can be realized as a nugatory two-strand twist of order one. Thus, the example of Figure \ref{cosmetic_unknot} is a non-trivial purely cosmetic band surgery on the unknot, under this definition. By connect-summing the unknots on each side of that figure with any other knot in a consistent way, it follows that every knot admits a non-trivial purely cosmetic band surgery. However, in Section 4.2, we will see that these examples are still quite close to being trivial.
\end{ex}

\begin{rmk}\label{surgtwist}
A non-coherent band surgery on a knot $K$ in $M$ can always be realized \textit{uniquely} as a two-strand twist of order 1, in the sense that the choice of band $b$ determines a corresponding twisting circle up to isotopy in $M_K$. Namely, parametrize $b$ as $I\times I$ with $I=[0,1]$ so that $K\cap b=I\times\{0,1\}$. Let $K'$ denote the knot obtained from $K$ by performing surgery along $b$. The isotopy class of the twisting circle $c$ is then specified by the requirements that: 
\begin{enumerate}
	\item
	$\{1/2\}\times I$ is a corresponding twisting arc. 
	
	\item
	Some twisting disk bounded by $c$ intersects both $K$ and $K'$ in exactly two points of opposite sign, upon orienting each knot somehow.
\end{enumerate}
\end{rmk}

\begin{ex} 
	For our second example of a cosmetic two-strand twist of odd order, note that performing the $-3$-twist determined by $c$ on the unknot $K$ from the previous example also produces the figure-eight knot $J'$ appearing on the right side of Figure \ref{cosmetic_fig8}. Viewed another way, $J'$ is obtained from $J$ by performing a two-strand $-5$-twist along $c$. Note that $c$ cannot bound a disk in the complement of $J$: otherwise, it would also bound a disk in the complement of the unknot $K$ from Figure \ref{cosmetic_unknot}.
	
	Since the figure-eight knot is achiral, this $-5$-twist is therefore cosmetic. At the same time, the figure-eight is fibered with genus one fiber surface depicted on the left of Figure \ref{cosmetic_fig8}, and the given twist can be realized as a $-5$-twist performed along the arc shown in red. Thus, the separating hypothesis on the twisting arc in Theorem \ref{mainthm2} cannot be removed.
\end{ex}

\begin{figure}[t]
	\centering
	\includegraphics[scale=0.5]{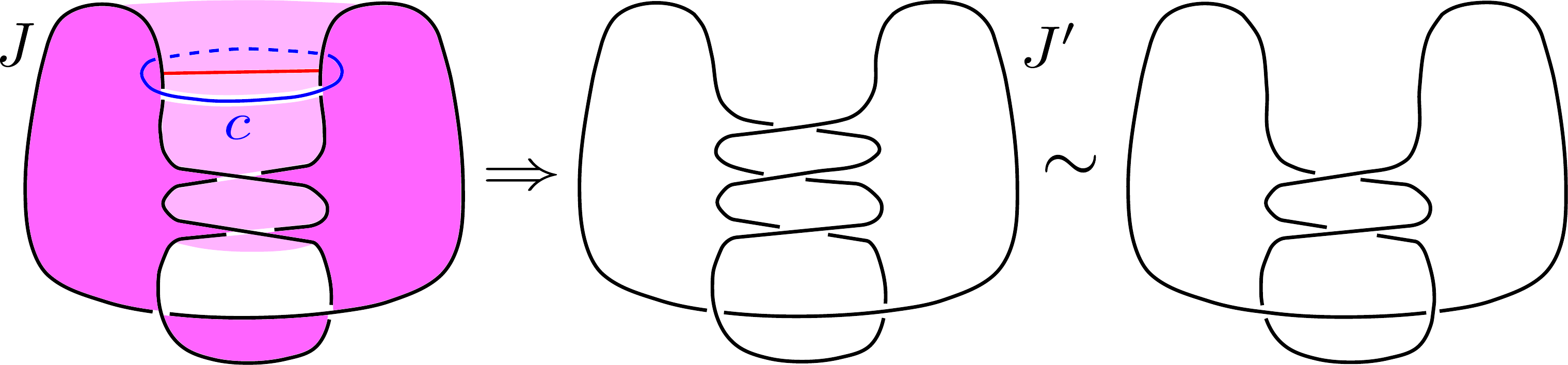}
	\caption{A cosmetic two-strand $-5$-twist on the figure-eight knot, performed along a non-separating arc in its fiber surface.}
	\label{cosmetic_fig8}
\end{figure}

\subsection{The case of non-coherent band surgery}

We begin this subsection by taking a closer look at the two-strand $-1$-twist on the unknot exhibited above. We continue to denote the initial version of the unknot, as on the left of Figure \ref{cosmetic_unknot}, by $K$. By the Montesinos trick, as summarized in Lemma \ref{montlem}, a corresponding twisting arc $\gamma$ lifts to a knot $\widetilde{\gamma}$ in $\Sigma(K)\cong S^3$ which admits a non-trivial Dehn surgery producing $S^3$. By the Gordon-Luecke theorem \cite{GL89}, it follows that $\widetilde{\gamma}$ must be an unknot. (Alternatively, one may be able to see this by directly constructing a diagram of $\widetilde{\gamma}$ in $\Sigma(K)$.) Since the corresponding $-1$-twist is not nugatory, this shows that the conclusion of Proposition \ref{liftprop} is false when $|n|=1$.

\begin{figure}[t]
	\centering
	\includegraphics[scale=0.35]{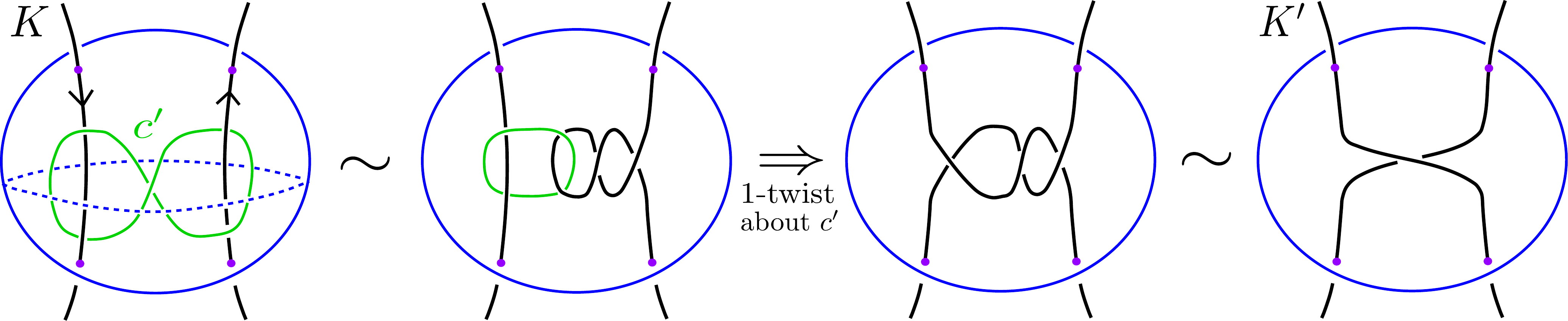}
	\caption{Realizing the effect of a two-strand $-1$-twist by instead performing a $+1$-twist along a different twisting circle.}
	\label{prop2_3_twist}
\end{figure}

At the same time, it turns out that the effect of a two-strand $-1$-twist can always be realized via a $+1$-twist on $K$ determined by another twisting circle $c'$, as shown in Figure \ref{prop2_3_twist}. In that figure, $c$ is the dashed circle sitting on the boundary of the ball in which both two-strand twists take place. While $c'$ is diagrammatically similar to $c$, it turns out that $c'$ \textit{does} bound a disk disjoint from $K$ in our example of the unknot. Figure \ref{weak_nug_unknot} provides a simple isotopy showing that $K\cup c'$ is a 2-component unlink in this case. Thus, in a sense, we may view the two-strand $-1$-twist determined by $c$ as equivalent to a nugatory $+1$-twist specified by a closely related twisting circle.

The relationship between $c'$ and $c$ can be made more precise. For one, we can view $c'$ as being embedded on the boundary of the ball $B$ in which the original two-strand $-1$-twist about $c$ takes place. Further, as shown in Figure \ref{prop2_3}$\m$, $c'$ then intersects $c$ exactly twice and meets a \textit{meridian arc} $\mu$ for $K$ in $\partial B$ exactly once, as defined at top of p. 8. Since $\mu$ lifts to a meridian of the lift $\widetilde{\gamma}$ in $\Sigma(K)$, where $\gamma$ is the original twisting arc corresponding to $c$, this means that each lift of $c'$ to $\Sigma(K)$ is a longitude of $\widetilde{\gamma}$. This implies that lifts of $c$ and $c'$ to $\Sigma(K)$ are isotopic in the manifold, although $c$ and $c'$ are distinct in $M_K$. \\

\begin{figure}[b]
	\centering
	\includegraphics[scale=0.5]{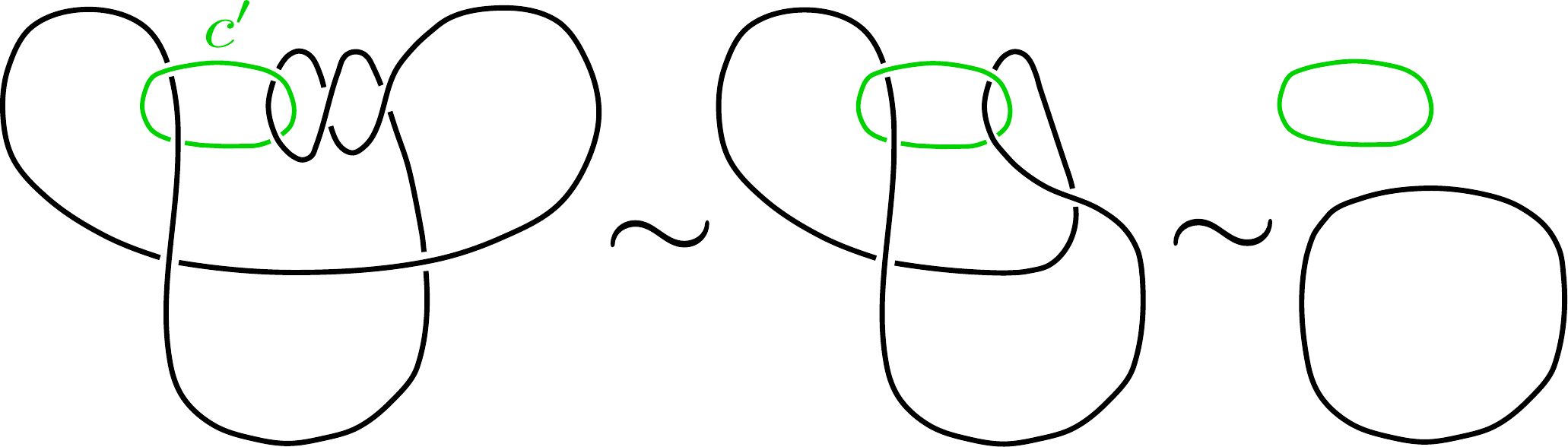}
	\caption{Isotoping the unknot $K$ from Figure \ref{cosmetic_unknot} to see that it is unlinked from the alternative twisting circle $c'$.}
	\label{weak_nug_unknot}
\end{figure}

This motivates the following definition. To set the stage, let $K$ be a knot in $M$ and $c$ be a twisting circle for $K$. Let $B$ be a 3-ball in $M$ in which a twisting disk bounded by $c$ is properly embedded, such that $\partial B$ intersects $K$ transversely in four points. The \textit{sign} of a two-strand $n$-twist naturally refers to the sign of $n$.

\begin{defn}\label{weaknug}
Let $K$, $c$, and $B$ be as above. Consider a two-strand twist of order one on $K$ in $B$, determined by $c$, which produces a knot $K'$. Say that this twist is \textit{weakly nugatory} if either it is nugatory, or there exists another twisting circle $c'$ for $K$ embedded in $\partial B-K$ such that:
\begin{description}
	\item{(a) }
	$c'$ bounds a disk properly embedded in $M_K$;
	
	\item{(b) }
	$c'$ intersects $c$ twice geometrically in $\partial B$ and intersects some meridian arc on $\partial B$ exactly once;
	
	\item{(c) }
	Performing the two-strand twist of order one on $K$ determined by $c'$, of opposite sign to that of the original twist about $c$,  produces a knot isotopic to $K'$. 
\end{description}
\end{defn}

\begin{figure}[t]
	\centering
	\includegraphics[scale=0.55]{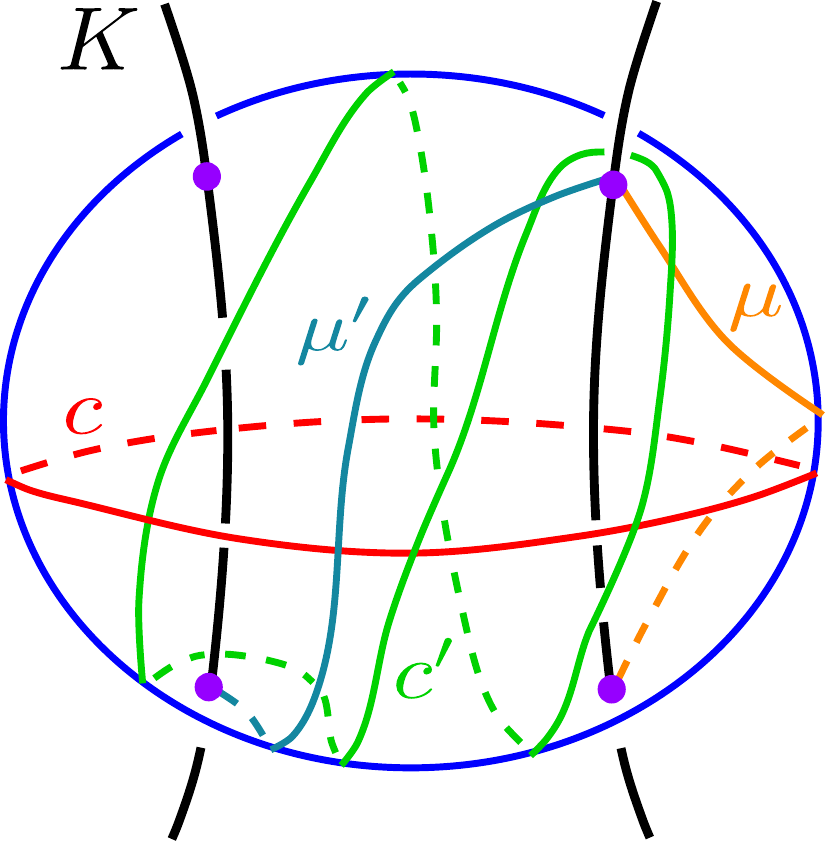}
	\caption{The curves and arcs involved in the discussion surrounding Definition \ref{weaknug}$\m$, and the proof of Proposition \ref{liftprop2}.}
	\label{prop2_3}
\end{figure}

\begin{rmk}
The reader may be wondering why we have not formulated this definition for a two-strand twist of any odd order. The reason is that there is no point: if such a definition is satisfied for a two-strand $n$-twist on $K$ with $|n|>1$, then the twist is actually nugatory. Note that, as in the example preceding this definition, condition (b) implies that a lift of $c'$ to $\Sigma(K)$ is a longitude of the lift $\widetilde{\gamma}$ of the original twisting arc. In particular, $\widetilde{\gamma}$ is then isotopic to a lift of $c'$ in $\Sigma(K)$. At the same time, condition (a) implies that each lift of $c'$ is unknotted in $\Sigma(K)$, so $\widetilde{\gamma}$ is therefore unknotted as well. By Proposition \ref{liftprop}, it follows that if $|n|>1$, then the given two-strand twist is nugatory.
\end{rmk}

The observation that $\widetilde{\gamma}$ is unknotted under the above definition is also true when $n=\pm 1$. The version of Proposition \ref{liftprop} which is valid in this case is precisely the converse of that statement.

\begin{propn}\label{liftprop2}
Suppose that $M$ is a rational homology sphere and $K$ is a null-homologous knot in $M$. Let $c$ and $\widetilde{\gamma}$ be as above, with respect to a two-strand twist of order one on $K$, and let $K'$ be the knot obtained from $K$ by performing this twist. If $K'$ is isotopic to $K$ and $\widetilde{\gamma}$ is unknotted in $\Sigma(K)$, then the twist must be weakly nugatory.
\end{propn}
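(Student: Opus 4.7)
My strategy is to follow the proof of Proposition~\ref{liftprop} as closely as possible, identifying where the $|n|=1$ case diverges and extracting the weaker conclusion of weak nugatoriness there.

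First, I apply the equivariant Dehn's lemma exactly as in the proof of Proposition~\ref{liftprop}: since $\widetilde{\gamma}$ is unknotted in $\Sigma(K)$, there is a disk $\widetilde{\Gamma}$ bounded by $\widetilde{\gamma}$ in $\Sigma(K)$ with $\tau(\widetilde{\Gamma})\cap\widetilde{\Gamma}=\emptyset$, and this descends to a properly embedded disk $\Gamma$ in $\overline{M_K-B}$ whose boundary is a simple closed curve $c':=\partial\Gamma\subset\partial B-K$. The lens-space/prime-decomposition argument used in that proof is purely homological and nowhere invokes the hypothesis $|n|\geq 2$; it applies verbatim to give $\Delta(\widetilde{\mu}',\widetilde{\lambda})=1$ where $\widetilde{\lambda}:=\partial\widetilde{\Gamma}$. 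Combined with $\Delta(\widetilde{\mu},\widetilde{\mu}')=1$ from Lemma~\ref{montlem}, this forces $\widetilde{\mu}'=\pm\widetilde{\mu}\pm\widetilde{\lambda}$ in the basis $(\widetilde{\mu},\widetilde{\lambda})$ for $H_1(\partial N)$; in particular, $\widetilde{\lambda}$ meets each of $\widetilde{\mu}$ and $\widetilde{\mu}'$ exactly once.

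The divergence from the proof of Proposition~\ref{liftprop} is that, with $|n|=1$, the longitude $\widetilde{\lambda}$ is no longer pinned down by its intersection numbers with $\widetilde{\mu}$ and $\widetilde{\mu}'$, so I cannot conclude that $c'$ is isotopic to $c$. Projecting intersection data through the branched double cover $\partial\widetilde{B}\to\partial B$ shows that $c'$ meets each of the meridian arcs $\mu$ and $\mu'$ in exactly one point on the four-punctured sphere $\partial B-K$. A short classification of simple closed curves on that sphere then forces $c'$ to be a non-trivial, non-peripheral $2+2$-separating curve, and in particular $c'$ bounds a disk $D'$ in $B$ meeting $K$ in exactly two points geometrically. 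The $2$-sphere $D'\cup\Gamma$ sits inside the rational homology sphere $M$, where $H_2(M;\mathbb{Z})=0$, and so has zero algebraic intersection with $K$; since $\Gamma$ is disjoint from $K$, this forces $D'\cdot K=0$ as well, and $c'$ is a valid twisting circle for $K$. The disk $\Gamma$ directly establishes condition~(a) of Definition~\ref{weaknug}, and consequently every two-strand twist on $K$ along $c'$ is nugatory.

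Finally, I verify (b) and (c) by a short case analysis. If $c'$ happens to be isotopic to $c$ in $\partial B-K$, an annulus between them there can be glued to $\Gamma$ to exhibit $c$ itself as bounding a disk in $M_K$, so the original twist is nugatory---and hence weakly nugatory by definition. Otherwise, $c$ and $c'$ represent the two distinct $2+2$-separating isotopy classes on $\partial B-K$ that separate the endpoints of the meridian arc $\mu$; standard facts about the four-punctured sphere give $c\cap c'=2$ geometrically, which together with $c'\cap\mu=1$ yields condition~(b). Condition~(c) is then automatic: because $c'$ bounds a disk in $M_K$, the $\mp 1$-twist along $c'$ is nugatory and therefore produces a knot isotopic to $K$, which is isotopic to $K'$ by hypothesis. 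The main obstacle in carrying out this plan is the identification of $c'$ as a genuine twisting circle (rather than merely a curve bounding a disk in $M_K$), which relies essentially on the rational-homology-sphere hypothesis and on the existence of the exterior disk $\Gamma$ to pin down the algebraic intersection of the complementary disk $D'\subset B$ with $K$.
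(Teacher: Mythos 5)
Your proof follows the same overall route as the paper's: pick up the surgery-slope analysis from the proof of Proposition~\ref{liftprop}, use the lens-space/prime-decomposition argument (which indeed never needs $|n|\geq 2$) to get $\widetilde{\mu}'=\widetilde{\mu}\pm\widetilde{\lambda}$, observe that $\widetilde{\lambda}$ is now only one of \emph{two} slopes meeting $\widetilde{\mu}$ and $\widetilde{\mu}'$ once each, and convert this into a dichotomy between $c$ and a second twisting circle $c'$ on $\partial B$, one of which must bound a disk in $M_K$. Two points differ. First, the direction of identification: you define $c'$ as $\partial\Gamma$ and then derive its position on the four-punctured sphere (your homological argument with the sphere $D'\cup\Gamma$ is a clean way to see that $c'$ is a genuine twisting circle), whereas the paper exhibits $c'$ concretely in Figure~\ref{prop2_3} and argues that one of $c,c'$ must lift to the slope $\widetilde{\lambda}$; these are two sides of the same correspondence. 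Second, and more substantively, your verification of condition~(c) of Definition~\ref{weaknug} is a shortcut: you deduce it from condition~(a) plus the hypothesis that $K'$ is isotopic to $K$, since a nugatory twist along $c'$ automatically returns a knot isotopic to $K$. This is logically valid for the proposition as stated, but the paper instead proves, by an explicit local isotopy (Figure~\ref{prop2_3_twist}), that the opposite-sign twist along $c'$ produces $K'$ \emph{independently} of whether $c'$ bounds a disk; that version records the genuine geometric content that the $c'$-band realizes the same operation as the $c$-band, and is what keeps condition~(c) from being vacuous. Your route renders (c) automatic whenever (a) holds and the twist is cosmetic, so it proves the proposition but would give nothing in a setting where the two bands must be compared without already assuming the knots isotopic. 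One small imprecision: there are infinitely many isotopy classes of essential curves in $\partial B-K$ separating the endpoints of $\mu$, so the phrase ``the two distinct $2{+}2$-separating isotopy classes that separate the endpoints of $\mu$'' does not by itself pin down $c$ and $c'$; what does is the requirement, which you established earlier via the covering, that both curves meet \emph{both} $\mu$ and $\mu'$ exactly once, corresponding to the two admissible slopes on $\partial N$.
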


\begin{proof}
We pick up from the third paragraph of the proof of Proposition \ref{liftprop} of Section 2.4: everything said up to that point makes sense for any two-strand twist determined by $c$. Recall that $N$ is the exterior of $\widetilde{\gamma}$ in $\Sigma(K)$, $\widetilde{\mu}$ is a meridian of $\widetilde{\gamma}$ on $\partial N$, $\widetilde{\mu}'$ is the slope of the Dehn filling which yields $\Sigma(K')$, and $\widetilde{\lambda}$ is the slope of the boundary of a properly embedded disk in $N$. In the case of an order one twist at hand, we have that $\widetilde{\mu}'=\widetilde{\mu}\pm\widetilde{\lambda}$ in $H_1(\partial N)$.

It follows that there are precisely two slopes on $\partial N$ which intersect both $\widetilde{\mu}$ and $\widetilde{\mu}'$ once geometrically, one of which is $\widetilde{\lambda}$. At the same time, there is a curve $c'$ on $\partial B-K$ distinct from $c$ which intersects both of the arcs $\mu$ and $\mu'$ exactly once, as shown in Figure \ref{prop2_3} for the case of a $-1$-twist. Note that $c'$ intersects $c$ exactly twice. Since $c$ and $c'$ lift to curves of distinct slopes on $\partial N$, it follows that one of $c$ and $c'$ must lift to a pair of curves of slope $\widetilde{\lambda}$.

Whichever of $c$ and $c'$ has this property must bound a properly embedded disk in $\overline{M_K-B}$. Namely, this disk will be the projection of a properly embedded disk in $N$ bounded by a copy of $\widetilde{\lambda}$. Since each disk bounded by $c'$ in $\partial B$ intersects $K$ in two points of opposite sign, as visible in Figure \ref{prop2_3}, we see that $c'$ is also a twisting circle for $K$. Consequently, to prove that the given order one twist about $c$ is weakly nugatory, it remains to show that performing the two-strand twist of order one on $K$ about $c'$, of sign opposite to that of the given twist, produces a knot isotopic to $K'$.

This fact comes from a simple local isotopy, and does not depend on whether or not $c'$ bounds a disk in $M_K$. We will demonstrate this in the case that the initial two-strand twist about $c$ is a $-1$-twist: the case of a $+1$-twist is extremely similar. Observe that, in this case, we may isotope $K$ and $c'$ together within $B$ as on the left half of Figure \ref{prop2_3_twist}. As shown in the right half of that figure, performing the two-strand $+1$-twist determined by the isotoped copy of $c'$ then yields a knot isotopic to $K'$. Since $c'$ bounds a disk embedded in $M_K$ in the case that $c$ does not, we conclude that the original order one two-strand twist from $K$ to $K'$ is weakly nugatory.
\end{proof}

This result enables us to quickly verify that, as advertised in the introduction, the unknot does not admit any cosmetic band surgeries that are more interesting than our example. As observed in Remark \ref{surgtwist}, every non-coherent band surgery on a knot can be realized (uniquely) as a two-strand twist of order one.

\begin{thm3}
Every cosmetic two-strand twist of order one on the unknot in $S^3$ is weakly nugatory.
\end{thm3}
\begin{proof}
Suppose that $\gamma$ is a twisting arc corresponding to a cosmetic two-strand $\pm 1$-twist on an unknot $K$. As in an argument made at the beginning of this section, the Gordon-Luecke theorem implies that $\widetilde{\gamma}$ is unknotted in $\Sigma(K)\cong S^3$. The conclusion then follows immediately from Proposition \ref{liftprop2}.
\end{proof}

\begin{refthm2}
In light of Proposition \ref{liftprop2}, it is natural to ask if the reasoning of Section 3 can be extended to say more about the case $n=\pm 1$ in the setting of Theorem \ref{mainthm2}. Namely, if the twisting arc hypothesis of that theorem is satisfied and a corresponding two-strand $\pm 1$-twist on $K$ produces an isotopic knot in $M$, can we argue that this twist is weakly nugatory? Unfortunately, it would not be a simple matter to extend our arguments to make this conclusion. One role that the hypothesis $|n|\geq 2$ plays in the work of Section 3 is to allow us to avoid case (a) of the conclusion of Theorem \ref{nithm} during the proof of Theorem \ref{corethm}. This case of Ni's theorem, where the surgery curve is a 1-crossing knot in $F\times[0,1]$, must be considered when the surgery slope is integral, as occurs for the surgery along $\widetilde{\gamma}$ when the order of the twist is one. 

While this situation is more complicated, the relationship between the monodromy of the fibration of $\widetilde{\Sigma}(K)$ and that of $\widetilde{\Sigma}(K')$ can still be described. By Proposition 1.4 of \cite{Ni11}, it follows readily that (up to isotopy and conjugation) these monodromies differ by composition with $T^{\pm 2}_aT^{\pm 2}_bT^{\mp 1}_c$ for a certain triple of curves $a,b,c$ which cobound a pair of pants in the fiber surface. However, since these Dehn twists are not all of the same chirality, the work of Kotschick in \cite{Kot04} cannot be used to make any conclusions about the commutator length of such a homeomorphism when $a$, $b$, and $c$ are all non-trivial.

It is not clear to the author how one might replace the application of Kotschick's work (Corollary \ref{kotcor}) in the concluding argument of Section 3 when $n=\pm 1$, in order to conclude that $\widetilde{\gamma}$ must be unknotted in $\Sigma(K)$. If this can be done, however, then Proposition \ref{liftprop} may be applied to deduce that the given $\pm 1$-twist must be weakly nugatory.
\end{refthm2}

\subsection{Questions}

Motivated by the considerations of this section, we conclude by highlighting two interesting problems for future work.

\begin{ques}\label{ques1}
Does there exist a non-trivial knot in $S^3$ admitting a cosmetic two-strand $\pm 1$-twist (i.e. band surgery) which is not weakly nugatory?
\end{ques}

\begin{ques}\label{ques2}
Given an arbitrary odd integer $n$, does there exist a knot in $S^3$ which admits a cosmetic two-strand $n$-twist? 
\end{ques}

\begin{figure}[t]
	\centering
	\includegraphics[scale=0.5]{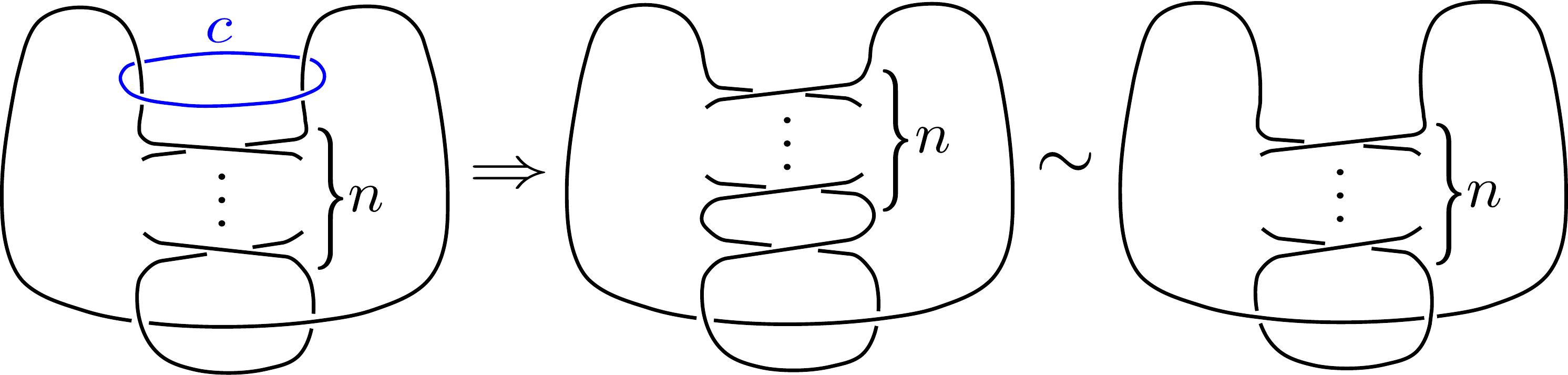}
	\caption{A chirally cosmetic two-strand $-(2n+1)$-twist on a twist knot admitting a standard diagram with $n+2$ crossings. The second operation denoted by $\sim$ is an isotopy.}
	\label{chi_cos_twist}
\end{figure}

If the answer to Question \ref{ques1} is yes, then such a cosmetic two-strand twist would provide a truly interesting example of purely cosmetic band surgery. Proposition \ref{liftprop2} may prove to be useful in answering this question negatively, at least for some classes of knots.

Of course, the examples of Section 4.1.1 answer Question \ref{ques2} affirmatively for $n=\pm 1$ and $n=\pm 5$. It is worth noting that those examples are generalized by an infinite family of \textit{chirally} cosmetic two-strand twists of odd order on twist knots, which transform them into their mirror images: see Figure \ref{chi_cos_twist}. However, since the unknot and figure-eight are the only achiral twist knots \cite{JLWW02}, this construction yields no further examples of \textit{purely} cosmetic twisting.

\bibliography{DissertationBibliography}

\bibliographystyle{siam}

\end{document}